\newtheorem{remark}{Remark}
\newtheorem{assumption}{Assumption}[section]
\def\D{\mathcal{D}}
\def\P{\mathbb{P}}
\def\E{\mathbb{E}}
\def\({\left(}
\def\){\right)}
\begin{document}

\title{Finite Element Approximations of a Class of Nonlinear
Stochastic Wave Equation with Multiplicative Noise}
\markboth{YUKUN LI AND SHUONAN WU AND YULONG XING}{Finite Element Approximations for Stochastic Wave Equation}

\author{
Yukun Li\thanks{Department of Mathematics, University of Central
Florida, Orlando, FL 32816, U.S.A. ({\tt yukun.li@ucf.edu}). The work of this author was partially supported by the NSF grant DMS-2110728.}
\and
Shuonan Wu\thanks{School of Mathematical Sciences, Peking University, 100871, China ({\tt snwu@math.pku.edu.cn}). The work of this author
was partially supported by the National Natural Science Foundation of
China grant No. 11901016.}
\and
Yulong Xing\thanks{Department of Mathematics, The Ohio State
University, Columbus, OH 43210, U.S.A.  ({\tt xing.205@osu.edu}). The work of
this author was partially supported by the NSF grant DMS-1753581.}
}

\maketitle

\begin{abstract}
Wave propagation problems have many applications in physics and
  engineering, and the stochastic effects are important in accurately
  modeling them due to the uncertainty of the media. This paper
  considers and analyzes a fully discrete finite element method for a
  class of nonlinear stochastic wave equations, where the diffusion
  term is globally Lipschitz continuous while the drift term is only
  assumed to satisfy weaker conditions as in
  \cite{chow2002stochastic}. The novelties of this paper are threefold. First, the error estimates cannot not be directly obtained if the numerical scheme in primal form is used. The numerical scheme in mixed form is introduced and several H\"{o}lder continuity results of the strong solution are proved, which are used to establish the error estimates in both $L^2$ norm and energy norms. Second, two types of discretization of the nonlinear term are proposed to establish the $L^2$ stability and energy stability results of the discrete solutions. These two types of discretization and proper test functions are designed to overcome the challenges arising from the stochastic scaling in time issues and the nonlinear interaction. These stability results play key roles in proving the probability of the set on which the error estimates hold approaches one. Third, higher order moment stability results of the discrete solutions are proved based on an energy argument and the underlying energy decaying property of the method. Numerical experiments are also presented to show the stability results of the discrete solutions and the convergence rates in various norms.
\end{abstract}

\begin{keywords}
stochastic wave equation, multiplicative noise, finite element method, higher order moment, error estimate, stability analysis.
\end{keywords}

\begin{AMS}
65N12, 
65N15, 
65N30, 
\end{AMS}

\section{Introduction}\label{sec-1}
In this paper, we consider the nonlinear stochastic wave equation
with Neumann boundary condition and functional-type multiplicative
noise, taking the form of
\begin{align}\label{sac_s}
&du_t = \Delta u dt+f(u)dt+g(u)dW(t),& \mbox{in }
\mathcal{D} \times (0,T],\\
&\frac{\partial u}{\partial n} = 0 & \mbox{in }
\partial\mathcal{D} \times (0,T],\\
&u(0)=h_1(x)\quad u_t(0)=h_2(x). & \mbox{in }\mathcal{D}.\label{sac_s2}
\end{align}
Here $\mathcal{D} \subset \mathbb{R}^d ~(d=1,2,3)$ is a bounded
domain, $W: \Omega \times (0, T] \to \mathbb{R}$ is a standard Weiner
process on the filtered probability space $(\Omega, \mathcal{F},
\{\mathcal{F}_t: t\geq 0\}, \mathbb{P})$.  The nonlinear drift term
$f(u)$ is assumed to satisfy the conditions specified in
\cite{chow2002stochastic}, which discusses the existence and
uniqueness for local and global solutions of \eqref{sac_s} in Sobolev
space. More specifically, we assume
\begin{align}\label{eq20191102_1}
f(u)=\sum_{j=1}^qa_j(x)u^j,
\end{align}
where $q$ is an odd integer with $1\le q\le3$ for $d=3$ and $q\ge1$ for
$d=1,\,2$, $a_j(x)$ are bounded and continuous for any $j$, and there
exist positive constants $\alpha\ge0$ and $\lambda>0$ such that
\begin{align}\label{eq20191102_2}
F(u) := -\int_0^uf(s)ds=-\sum_{j=1}^q\frac{1}{j+1}a_j(x)u^{j+1}\ge\left(\frac{\alpha}{2} + 
\frac{\lambda}{2} u^{q-1}\right)u^2.
\end{align}
One example that satisfies these conditions is $f(u)=-u-u^3$.
Furthermore, we assume that $g(u)$ is continuously differentiable, globally Lipschitz continuous, $|g''(u)|$ is bounded, and satisfies the growth condition, i.e., there exists a constant
$C$ such that
\begin{align}
g&\in C^1,\label{eq20210627_1}\\
|g(a) - g(b)| &\leq C |a-b|,\label{eq20210627_2}\\
|g(a)|^2 &\leq C(1+a^2), \label{gu_grow}\\
|g''(u)|&\le C\label{eq20210627_3}.
\end{align}
Throughout this paper, $C$ denotes a generic positive constant, which
may have different values at different occasions. Under these
assumptions on the drift term and the diffusion term, it is proved in
\cite{chow2002stochastic} that there exists a unique continuous
solution $u(t,\cdot) \in H^1(\mathcal{D})$ and $u_t(t,\cdot)\in
L^2(\mathcal{D})$ in $[0,T]$ such that 
\begin{equation} \label{eq:stability-continuous}
\mathbb{E}\left[\sup_{t \leq T} \left\{ \|u_t(t,x)\|_{L^2}^2 + \|\nabla
u(t,x)\|_{L^2}^2 + \|u(t,x)\|_{L^{q+1}}^{q+1} \right\}\right] < \infty.
\end{equation}

Wave propagation problem arises in many fields of scientific and
engineering applications \cite{Durran1999}, with examples including
the geoscience, petroleum engineering, telecommunication, and the
defense industry etc. The deterministic wave equations have been
extensively investigated in the last few decades. For the second-order
wave equation with nonlinear drift terms, we refer the readers to some
partial differential equation (PDE) papers in \cite{glassey1973blow,reed2006abstract}, which discuss their well-posedness or blow-up
properties based on different polynomial nonlinear drift terms. 
A large variety of numerical methods have been proposed for the
numerical approximations of the second-order wave equation, including
finite difference, finite element, finite volume, spectral methods and
integral equation based methods. We refer to
\cite{adjerid2011discontinuous,baccouch2012local,chou2014optimal,chung2006optimal,falk1999explicit,grote2006discontinuous,monk2005discontinuous,riviere2003discontinuous,safjan1993high,sun2021,xing2013energy,zhong2011numerical}, and the references therein for
various numerical methods based on the Galerkin approach. 

In the wave propagation applications, the stochastic effects are
important in accurately modeling them due to the uncertainty of the
media. The stochastic wave equation is a hyperbolic type stochastic
partial differential equation (SPDE), and its solution behavior is
very different from that of the stochastic heat equation.  
Numerical methods for the wave equation with various forms of
stochasticity have been studied in the literature. For example, the
wave equations with random coefficients or random initial/boundary
conditions were studied in \cite{chou2019energy,gottlieb2008galerkin,motamed2014}, and numerical methods for the stochastic wave equation
with additive noise were presented in \cite{cohen2013trigonometric,cui2019strong,gubinelli2018renormalization,hausenblas2010weak,kovacs2010finite}. For the multiplicative noise, the well-posedness or
the regularity of the solutions of the stochastic wave equation was
considered in \cite{dalang2009stochastic,dalang1998stochastic,millet2000stochastic,millet1999stochastic},
where the nonlinear drift term is Lipschitz continuous, and the noise
is white in time and correlated in space. Later, in
\cite{chow2002stochastic,chow2009nonlinear}, some blow-up solutions
were presented for a class of nonlinear stochastic wave equations with
white in time and correlated in space noise. In
\cite{chow2002stochastic}, the theorems on well-posedness of the local
and global solutions were given for the stochastic wave equation with
nonlinear drift term in the form of \eqref{eq20191102_1}. The
long-time asymptotic bounds of the solutions were proved in
\cite{chow2006asymptotics}. There have also been some studies on
numerical methods for the stochastic wave equation with multiplicative
noise. For example, in \cite{walsh2006numerical}, a difference scheme
was presented for the model when both the nonlinear drift term $f$ and
diffusion term $g$ are Lipschitz continuous, and the optimal rates of
convergence were established for such method. The stochastic wave
equations with Lipschitz continuous nonlinear drift term and diffusion
term were studied using the semi-group approach in
\cite{anton2016full,cohen2018numerical,cohen2015fully,quer2006space}. As a comparison, the multiplicative noise is considered in this paper based on the variational approach (see
\cite{feng2014finite,feng2017finite,feng2020fully,feng2018strong}),
and the nonlinear drift term is not Lipschitz continuous.

In this paper, we present the fully discrete methods for the
stochastic wave equation with multiplicative noise, by utilizing the
finite element approximation in space, and the implicit Euler/modified Crank-Nicolson
approximation in time. There are two main objectives in this paper.
First, we want to establish the second moment and higher order moment
stability results of the discrete solutions in $L^2$ norm and various
energy norms. This is motivated by the conjecture that the numerical
solutions will inherit the stability properties of the strong
solutions. The goal is achieved by designing the proper numerical scheme, choosing the
test function and using an energy argument. Second, we want to provide the convergence rates of the
error estimates in both $L^2$ and
energy norms for the proposed method when the nonlinear drift term is
not Lipschitz continuous and the diffusion term is multiplicative noise, and this would be the first work to achieve such goal, to our best knowledge. To achieve this objective, we rewrite the numerical scheme in the mixed form, utilize the above stability results, establish several H\"{o}lder continuity results, and construct a subset with nearly probability one, to handle
complex interaction between the multiplicative noise and super-linear
nonlinear term, such that the error estimates hold on this subset. This concept is proposed based on the bounds of the solutions and is motivated by the idea in \cite{carelli2012rates}.

The rest of the paper is organized as follows. In Section \ref{sec-2},
we prove the H\"{o}lder continuity results and some technical lemmas which can be used to establish the error estimates. In Section
\ref{sec-3}, we present the fully discrete finite element methods for
the stochastic wave equation, and prove the discrete stability
in $L^2$ norm, discrete stability in various energy norm, as well as
the discrete higher order moment stability in $L^2$ norm and energy
norms. In Section \ref{sec-4}, the error estimates in $L^2$ norm and
energy norms are established on a subset of which the probability approaches $1$ as
spatial mesh size decreases to $0$. Numerical experiments are provided in Section
\ref{sec-5} to validate the theoretical results, including discrete
stability results and convergence rates in different norms.

\section{Preliminaries and properties of SPDE solution}\label{sec-2}
In this section, we first derive the H\"{o}lder continuity in time for
the strong solution $u$, and then present some basic lemmas that will
be used in the stability and error estimate analysis in Sections
\ref{sec-3} and \ref{sec-4}. 

The standard Sobolev notations are adopted in the paper. We use
$\|\cdot\|_{L^p}$ and $\|\cdot\|_{H^k}$ to denote the $L^p$ and $H^k$
norms in the whole domain $\mathcal{D}$.  The notation $(\cdot\, ,
\cdot)$ represents the standard inner product on $\mathcal{D}$. 


\begin{lemma}\label{lem20190928_1}
In the 1D and 2D settings, for each individual term $u^q$ of the
nonlinear function $f(\cdot)$ defined in \eqref{eq20191102_1}, we have
$$ 
\|u^q-v^q\|_{L^2}^2\le
C\left(\|u\|_{H^1}^{2(q-1)}+\|v\|_{H^1}^{2(q-1)}\right)\|u-v\|_{H^1}^2,
$$
where $q>0$ is an integer, and $u$ and $v$ are two real value
functions. The same result holds for $q=1,\,2,\,3$ in the 3D setting.
\end{lemma}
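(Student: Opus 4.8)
The plan is to reduce the inequality to a pointwise algebraic identity, followed by H\"{o}lder's inequality and the Sobolev embedding $H^1(\mathcal{D})\hookrightarrow L^r(\mathcal{D})$. First I would use the elementary factorization, valid for every integer $q\ge1$,
$$u^q-v^q=(u-v)\sum_{k=0}^{q-1}u^{q-1-k}v^k.$$
Taking absolute values, noting that the sum has $q$ terms, and bounding each monomial by Young's inequality, $|u|^{q-1-k}|v|^k\le C\left(|u|^{q-1}+|v|^{q-1}\right)$, yields the pointwise estimate
$$|u^q-v^q|^2\le C\,|u-v|^2\left(|u|^{2(q-1)}+|v|^{2(q-1)}\right).$$

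Integrating over $\mathcal{D}$ then splits the right-hand side into two structurally identical pieces. For the first, H\"{o}lder's inequality with a conjugate pair $(p,p')$, $\tfrac1p+\tfrac1{p'}=1$, gives
$$\int_{\mathcal{D}}|u-v|^2|u|^{2(q-1)}\,dx\le \|u-v\|_{L^{2p}}^2\,\|u\|_{L^{2(q-1)p'}}^{2(q-1)},$$
and the $|v|^{2(q-1)}$ piece is treated identically. It then remains to control the $L^r$ norms by $H^1$ norms via Sobolev embedding. In one and two dimensions $H^1\hookrightarrow L^r$ for every finite $r$ (indeed $H^1\hookrightarrow L^\infty$ when $d=1$), so any admissible pair $(p,p')$ is permissible and the claimed bound follows for all integers $q$, with the prefactor $\|u\|_{H^1}^{2(q-1)}+\|v\|_{H^1}^{2(q-1)}$ appearing as stated.

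The only place where the dimension matters --- and the main, though still elementary, obstacle --- is the three-dimensional case, where the embedding holds merely for $r\le6$. One must then verify that the exponents $2p$ and $2(q-1)p'$ can simultaneously be kept at most $6$ subject to $\tfrac1p+\tfrac1{p'}=1$. For $q=1,2$ there is slack (e.g.\ $p=p'=2$ works when $q=2$), whereas for $q=3$ the choice is forced to the borderline values $p=3,\ p'=\tfrac32$, which yield $2p=6$ and $2(q-1)p'=6$ exactly. This compatibility condition is precisely what restricts the three-dimensional statement to $q\le3$, and checking it is the crux of an otherwise routine argument.
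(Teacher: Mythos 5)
Your proof is correct and follows essentially the same route as the paper's: the factorization $u^q-v^q=(u-v)\sum_{k}u^{q-1-k}v^k$, a pointwise Young's inequality to reduce to $|u|^{q-1}+|v|^{q-1}$, then H\"{o}lder's inequality and the Sobolev embedding $H^1\hookrightarrow L^r$, with the restriction $q\le 3$ in 3D coming from the borderline exponents $2p=2(q-1)p'=6$. In fact, your write-up is more explicit than the paper's (which merely cites ``Young's inequality and the embedding theorem'' and calls the 3D case ``similar''), particularly in verifying the exponent compatibility that forces $q\le 3$ when $d=3$.
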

\begin{proof}
In the 1D and 2D settings, we have
\begin{align}\label{eq20190907_5}
\|u^q-v^q\|_{L^2}^2&\le\|(u^{q-1}+u^{q-2}v+\cdots+v^{q-1})(u-v)\|_{L^2}^2\\
&\le C\left(\|u^{q-1}(u-v)\|_{L^2}^2+\|v^{q-1}(u-v)\|_{L^2}^2\right)\notag\\
&\le C\left(\|u\|_{H^1}^{2(q-1)}+\|v\|_{H^1}^{2(q-1)}\right)\|u-v\|_{H^1}^2\notag
\end{align}
by applying Young's inequality and the embedding theorem. In the 3D
setting, the proof is similar to \eqref{eq20190907_5}.
\end{proof}

The following lemma on the discrete summation-by-parts property is
also provided.  The proof is straightforward and skipped here. 
\begin{lemma}\label{lem20180515_1}
Suppose $\{a_n\}_{n=0}^\ell$ and $\{b_n\}_{n=0}^\ell$ are two
sequences of functions. Then
\begin{equation*}
\sum_{n=1}^\ell(a^n-a^{n-1},b^n) =
(a^\ell,b^\ell)-(a^0,b^0)-\sum_{n=1}^\ell(a^{n-1},b^n-b^{n-1}).
\end{equation*}
\end{lemma}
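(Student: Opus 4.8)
The plan is to prove this as an elementary consequence of the bilinearity of the inner product, via a discrete analogue of the product rule followed by telescoping (Abel summation). First I would establish the pointwise-in-$n$ identity
\[
(a^n,b^n)-(a^{n-1},b^{n-1}) = (a^n-a^{n-1},b^n)+(a^{n-1},b^n-b^{n-1}),
\]
which follows immediately by expanding each inner product on the right using linearity in both arguments: the two cross terms $-(a^{n-1},b^n)$ and $+(a^{n-1},b^n)$ cancel, leaving precisely the left-hand side $(a^n,b^n)-(a^{n-1},b^{n-1})$.

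Next I would sum this identity over $n=1,\dots,\ell$. The left-hand side telescopes to $(a^\ell,b^\ell)-(a^0,b^0)$, yielding
\[
(a^\ell,b^\ell)-(a^0,b^0) = \sum_{n=1}^\ell(a^n-a^{n-1},b^n)+\sum_{n=1}^\ell(a^{n-1},b^n-b^{n-1}).
\]
Isolating the first sum on the right then gives exactly the asserted formula.

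There is no genuine obstacle here: the result is a formal identity valid for any bilinear pairing, so no regularity or structural assumption on the sequences $\{a_n\}$ and $\{b_n\}$ is needed, and in particular the finite-element and $L^2$ setting plays no role in the argument. The only point requiring care is index bookkeeping, namely ensuring the telescoping sum is written with the correct endpoints $(a^0,b^0)$ and $(a^\ell,b^\ell)$ and that the shifted index in $a^{n-1}$ appears consistently across the two sums. An alternative, equally short route is to expand the target sum directly as $\sum_{n=1}^\ell(a^n,b^n)-\sum_{n=1}^\ell(a^{n-1},b^n)$ and re-index the second sum, but the product-rule-plus-telescoping argument is cleaner and mirrors the continuous integration-by-parts identity that this lemma is meant to discretize.
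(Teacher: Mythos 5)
Your proof is correct. The paper itself offers no proof of this lemma---it is explicitly stated as ``straightforward and skipped''---and your argument (the pointwise discrete product rule, telescoping over $n$, then isolating the desired sum) is precisely the standard Abel-summation reasoning the authors had in mind, with the index bookkeeping handled correctly.
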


Define a new variable $v$ as $v:=u_t$. Then we prove the H\"older continuity in time for the strong solution $u$ and $v$ with respect to the spatial $L^2$ norm.
\begin{lemma}\label{lem_20201226_1}
Let $u$ be the strong solution to problem \eqref{sac_s}--\eqref{sac_s2}. Then for any $s,t \in [0,T]$ with $s < t$, we have
\begin{align*}
\E \big[ \| u(t)-u(s) \|_{L^2}^2 \big]  \leq C(t-s)^2,
\end{align*}
where
\begin{align*}
C =&\, C\E \big[ \| u_t(0)\|_{L^2}^2\big]+C\E \big[ \int_0^t\| \Delta u(\zeta)\|_{L^2}^2d\zeta\big]+C\E \big[ \int_0^t \|u(\zeta)\|_{L^{2q}}^{2q}d\zeta\big]+C.
\end{align*}
\end{lemma}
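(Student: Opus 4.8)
The plan is to exploit the structural fact that the driving noise enters the system only through $v=u_t$. Because the equation can be written as the first-order system $du=v\,dt$ together with $dv=(\Delta u+f(u))\,dt+g(u)\,dW$, the first relation contains no stochastic integral, so the increment of $u$ is an ordinary (pathwise) time integral,
\[
u(t)-u(s)=\int_s^t v(\zeta)\,d\zeta .
\]
It is precisely this absence of a stochastic integral in $u$ that allows the quadratic rate $(t-s)^2$; a direct stochastic forcing of $u$ would yield only the first power $C(t-s)$. Applying the Minkowski and Cauchy--Schwarz inequalities in time to this identity and then taking expectations gives
\[
\E\|u(t)-u(s)\|_{L^2}^2\le (t-s)\int_s^t\E\|v(\zeta)\|_{L^2}^2\,d\zeta ,
\]
so the whole problem reduces to a bound on the second moment $\E\|v(\zeta)\|_{L^2}^2$ that is uniform in $\zeta\in[s,t]$.

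To obtain this bound I would use the integral representation
\[
v(\zeta)=u_t(0)+\int_0^\zeta\bigl(\Delta u(r)+f(u(r))\bigr)\,dr+\int_0^\zeta g(u(r))\,dW(r),
\]
and estimate the three resulting contributions to $\E\|v(\zeta)\|_{L^2}^2$ separately. The initial term gives $C\,\E\|u_t(0)\|_{L^2}^2$. For the deterministic integral, Cauchy--Schwarz in time (using $\zeta\le T$) produces $C\,\E\int_0^t\|\Delta u+f(u)\|_{L^2}^2\,dr$; writing $f(u)=\sum_{j=1}^q a_j u^j$ as in \eqref{eq20191102_1} with bounded coefficients $a_j$ and dominating each power on the bounded domain $\D$ by $1+|u|^{2q}$, I would bound $\|f(u)\|_{L^2}^2\le C(1+\|u\|_{L^{2q}}^{2q})$, which produces exactly the terms $C\,\E\int_0^t\|\Delta u\|_{L^2}^2\,d\zeta$ and $C\,\E\int_0^t\|u\|_{L^{2q}}^{2q}\,d\zeta$ of the stated constant, with the constant-order remainder folded into the generic $C$.

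The step needing the most care is the stochastic contribution $\E\bigl\|\int_0^\zeta g(u(r))\,dW(r)\bigr\|_{L^2}^2$, which must be shown to be bounded uniformly in $\zeta\in[0,T]$ rather than growing in a way that would spoil the rate. Here I would invoke the It\^{o} isometry to rewrite it as $\E\int_0^\zeta\|g(u(r))\|_{L^2}^2\,dr$, then use the growth condition \eqref{gu_grow} and the continuous stability estimate \eqref{eq:stability-continuous} --- the latter controlling $\E\int_0^T\|u\|_{L^2}^2\,dr$ via the embedding $\|u\|_{L^2}\le C\|u\|_{L^{q+1}}$ on the bounded domain --- to conclude that this term is bounded by a constant, which is absorbed into $C$. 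Collecting the three pieces gives a bound on $\E\|v(\zeta)\|_{L^2}^2$ that is uniform in $\zeta$ and has exactly the form of the claimed constant; substituting back and integrating the constant over $[s,t]$ supplies the second factor of $(t-s)$ and completes the proof. The only technical points to verify are the legitimacy of the It\^{o} isometry and of exchanging expectation with the time integral, both justified by the integrability furnished by \eqref{eq:stability-continuous}.
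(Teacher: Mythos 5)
Your proposal is correct, and its core coincides with the paper's: both proofs bound $\E\big[\|v(\zeta)\|_{L^2}^2\big]=\E\big[\|u_t(\zeta)\|_{L^2}^2\big]$ uniformly in $\zeta$ by writing $v(\zeta)$ through its integral representation, splitting into the initial-data part, the drift parts ($\Delta u$ and $f(u)$), and the stochastic part, and then applying the triangle inequality, Cauchy--Schwarz in time, the polynomial bound $\|f(u)\|_{L^2}^2\le C(1+\|u\|_{L^{2q}}^{2q})$, and the It\^{o} isometry with \eqref{gu_grow} --- exactly the content of \eqref{eq20201226_2}. Where you genuinely diverge is the step that extracts the factor $(t-s)^2$: the paper invokes a mean-value identity $\E\big[\|u(t)-u(s)\|_{L^2}^2\big]=\E\big[\|u_t(\xi)\|_{L^2}^2\big](t-s)^2$ with $\xi\in(s,t)$ (see \eqref{eq20201226_3}), whereas you use the pathwise identity $u(t)-u(s)=\int_s^t v(\zeta)\,d\zeta$ (valid because the $u$-equation carries no stochastic integral) followed by Minkowski and Cauchy--Schwarz, giving $\E\big[\|u(t)-u(s)\|_{L^2}^2\big]\le (t-s)\int_s^t\E\big[\|v(\zeta)\|_{L^2}^2\big]\,d\zeta$. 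Your route is arguably the better one: for an $L^2(\D)$-valued random process there is no exact mean value equality (the point $\xi$ would be random and $x$-dependent), so the paper's step is best read as shorthand for precisely the integral estimate you wrote down; your version makes this rigorous at no extra cost and produces the same constant, since the stochastic contribution you bound via \eqref{eq:stability-continuous} and the embedding of $L^{q+1}$ into $L^2$ on the bounded domain is absorbed into the trailing generic $C$ of the lemma's constant, consistent with what the paper does implicitly.
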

\begin{proof}
The SPDE \eqref{sac_s} leads to
\begin{align}\label{eq20201226_1}
u_t(t)-u_t(0)=\int_0^t\Delta ud\zeta+\int_0^tf(u)d\zeta+\int_0^tg(u)dW(\zeta).
\end{align}
Taking the square, the spatial integral, and the expectation on both sides of \eqref{eq20201226_1}, and then using the triangle inequality, the Schwarz inequality, and It\^{o} isometry, we obtain
\begin{align}\label{eq20201226_2}
&\E \big[ \| u_t(t)\|_{L^2}^2\big]\\
&\le C\E \big[ \| u_t(0)\|_{L^2}^2\big]+C\E \big[ \int_{\mathcal{D}}\bigl(\int_0^t \Delta u(\zeta)d\zeta\bigr)^2dx\big]\notag\\
&\qquad+C\E \big[ \int_{\mathcal{D}}\bigl(\int_0^t f(u(\zeta))d\zeta\bigr)^2dx\big]+C\E \big[ \int_{\mathcal{D}}\bigl(\int_0^t g(u(\zeta))dW(\zeta)\bigr)^2dx\big]\notag\\
&\le C\E \big[ \| u_t(0)\|_{L^2}^2\big]+C\E \big[ \int_0^t\| \Delta u(\zeta)\|_{L^2}^2d\zeta\big]\notag\\
&\qquad+C\E \big[ \int_0^t\| f(u(\zeta))\|_{L^2}^2d\zeta\big]+C\E \big[\int_0^t\| g(u(\zeta))\|_{L^2}^2d\zeta\big]\notag\\
&\le C\E \big[ \| u_t(0)\|_{L^2}^2\big]+C\E \big[ \int_0^t\| \Delta u(\zeta)\|_{L^2}^2d\zeta\big]+C\E \big[ \int_0^t \|u(\zeta)\|_{L^{2q}}^{2q}d\zeta\big]+C\notag,
\end{align} 
where \eqref{gu_grow} is used in the derivation of the last inequality. For any $s,t \in [0,T]$ with $s < t$, we have
\begin{align}\label{eq20201226_3}
\E \big[ \| u(t)-u(s) \|_{L^2}^2 \big] =&\, \E \big[ \| u_t(\xi)\|_{L^2}^2 \big] (t-s)^2\\
\le &\, C(t-s)^2\notag,
\end{align}
where $\xi\in(s,t)$ and
\begin{align*}
C =\,& C\E \big[ \| u_t(0)\|_{L^2}^2\big]+C\E \big[ \int_0^t\| \Delta u(\zeta)\|_{L^2}^2d\zeta\big]+C\E \big[ \int_0^t \|u(\zeta)\|_{L^{2q}}^{2q}d\zeta\big]+C.
\end{align*} 
This finishes the proof of the lemma.
\end{proof}

\begin{lemma}\label{lem_20201226_2}
For any $s,t \in [0,T]$ with $s < t$, we have 
\begin{align*}
\E \big[ \| v(t)-v(s) \|_{L^2}^2 \big]  \leq C(t-s),
\end{align*}
where
\begin{align*}
C =C\E \big[ \int_0^t\| \Delta u(\zeta)\|_{L^2}^2d\zeta\big]+C\E \big[ \int_0^t\| u(\zeta)\|_{L^{2q}}^{2q}d\zeta\big]+C.
\end{align*}
\end{lemma}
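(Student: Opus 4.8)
The plan is to follow the same strategy as Lemma \ref{lem_20201226_1}, but to estimate the increment of $v=u_t$ \emph{directly} from the integrated form of the SPDE, since $v$ is not differentiable in time and a mean value theorem is unavailable. Integrating \eqref{sac_s} from $s$ to $t$ gives
\begin{align*}
v(t)-v(s)=u_t(t)-u_t(s)=\int_s^t\Delta u\,d\zeta+\int_s^tf(u)\,d\zeta+\int_s^tg(u)\,dW(\zeta).
\end{align*}
First I would square, integrate over $\mathcal{D}$, take the expectation, and use the triangle inequality to split $\E\big[\|v(t)-v(s)\|_{L^2}^2\big]$ into the three contributions from the Laplacian, the drift, and the diffusion.

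For the two deterministic (Bochner) integrals I would apply the Cauchy-Schwarz inequality in time, pointwise in $x$, as in $\bigl(\int_s^t\Delta u\,d\zeta\bigr)^2\le(t-s)\int_s^t|\Delta u|^2\,d\zeta$, and then integrate over $\mathcal{D}$. This extracts one factor of $(t-s)$ and, after replacing $\int_s^t$ by $\int_0^t$, leaves $(t-s)\,\E\big[\int_0^t\|\Delta u\|_{L^2}^2\,d\zeta\big]$, which is the first term of the asserted constant. The drift term is handled identically, producing $(t-s)\,\E\big[\int_s^t\|f(u)\|_{L^2}^2\,d\zeta\big]$; I would then bound $\|f(u)\|_{L^2}^2$ using the polynomial structure \eqref{eq20191102_1} and the boundedness of the $a_j$, namely $|f(u)|^2\le C\sum_{j=1}^q|u|^{2j}$, together with Young's inequality on the bounded domain ($\|u\|_{L^{2j}}^{2j}\le C(\|u\|_{L^{2q}}^{2q}+1)$ for $1\le j\le q$). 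This gives $\|f(u)\|_{L^2}^2\le C(\|u\|_{L^{2q}}^{2q}+1)$ and hence a contribution bounded by $(t-s)\big(C\,\E[\int_0^t\|u\|_{L^{2q}}^{2q}\,d\zeta]+C\big)$.

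The stochastic term is where the scaling mechanism genuinely differs from Lemma \ref{lem_20201226_1}. Here I would use the It\^o isometry (pointwise in $x$, then Fubini) to write $\E\big[\int_{\mathcal{D}}\big(\int_s^tg(u)\,dW\big)^2dx\big]=\E\big[\int_s^t\|g(u)\|_{L^2}^2\,d\zeta\big]$, with \emph{no} additional Cauchy-Schwarz factor. The growth condition \eqref{gu_grow} yields $\|g(u)\|_{L^2}^2\le C(1+\|u\|_{L^2}^2)$, and the stability estimate \eqref{eq:stability-continuous} (with $L^{q+1}\hookrightarrow L^2$ on the bounded domain) bounds $\E[\sup_{\zeta\le T}\|u(\zeta)\|_{L^2}^2]<\infty$, so that $\E\big[\int_s^t\|g(u)\|_{L^2}^2\,d\zeta\big]\le C(t-s)$. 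Collecting the three contributions then produces exactly the stated bound $C(t-s)$ with the constant as claimed.

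I expect the only conceptual point, rather than a computational obstacle, to be this last term: by the quadratic-variation scaling of It\^o integrals it contributes at exactly first order in $(t-s)$ and cannot be improved, which is precisely why $v=u_t$ is only H\"older-$1/2$ in time, in contrast with the behavior of $u$ in Lemma \ref{lem_20201226_1}, where a mean value theorem in time is available and yields a squared increment of order $(t-s)^2$. The remaining work is routine, consisting of the Cauchy-Schwarz step for the two Bochner integrals, the absorption of the lower-order polynomial pieces of $f$ into the $L^{2q}$ norm, and the uniform-in-$(s,t)$ bounds obtained by enlarging $\int_s^t$ to $\int_0^t$.
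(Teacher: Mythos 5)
Your proposal is correct and follows essentially the same route as the paper's proof: integrate the SPDE from $s$ to $t$, split into the three contributions, apply Cauchy--Schwarz in time to the two Bochner integrals (yielding the $(t-s)^2$-order pieces) and the It\^{o} isometry to the stochastic integral (yielding the irreducible first-order piece), then bound $\|f(u)\|_{L^2}^2$ by the $L^{2q}$ norm and $\|g(u)\|_{L^2}^2$ via the growth condition \eqref{gu_grow}. Your treatment of the final term is in fact slightly cleaner than the paper's, which leaves $\E\big[\int_s^t\|u(\zeta)\|_{L^2}^2d\zeta\big]$ inside the ``constant,'' whereas you explicitly invoke \eqref{eq:stability-continuous} to extract the factor $(t-s)$.
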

\begin{proof}
By \eqref{sac_s}, for any $s,t \in [0,T]$ with $s < t$, we have
\begin{align}\label{eq20201226_4}
v(t)-v(s)=\int_s^t\Delta ud\zeta+\int_s^tf(u)d\zeta+\int_s^tg(u)dW(\zeta).
\end{align}
Taking the square, the spatial integral, and the expectation on both sides of \eqref{eq20201226_4}, and then using the triangle inequality, the Schwarz inequality, and It\^{o} isometry, we obtain
\begin{align}\label{eq20201226_5}
&\E \big[ \| v(t)-v(s) \|_{L^2}^2 \big] \\
&\le C\E \big[ \int_s^t\| \Delta u(\zeta)\|_{L^2}^2d\zeta\big](t-s)\notag\\
&\qquad+C\E \big[ \int_s^t\| f(u(\zeta))\|_{L^2}^2d\zeta\big](t-s)+C\E \big[\int_s^t\| g(u(\zeta))\|_{L^2}^2d\zeta\big]\notag,\\
&\le C\E \big[ \int_s^t\| \Delta u(\zeta)\|_{L^2}^2d\zeta\big](t-s)\notag\\
&\qquad+C\E \big[ \int_s^t\| u(\zeta)\|_{L^{2q}}^{2q}d\zeta\big](t-s)+C\E \big[\int_s^t\|u(\zeta)\|_{L^2}^2d\zeta\big]+C(t-s)\notag,\\
&\le C(t-s),\notag
\end{align} 
where
\begin{align*}
C =C\E \big[ \int_s^t\| \Delta u(\zeta)\|_{L^2}^2d\zeta\big]+C\E \big[ \int_s^t\| u(\zeta)\|_{L^{2q}}^{2q}d\zeta\big]+C\E \big[\int_s^t\|u(\zeta)\|_{L^2}^2d\zeta\big]+C,
\end{align*} 
and this finishes the proof of the lemma.
\end{proof}

Next, we prove the H\"older continuity in time for the strong solution $u$ and $v$ with respect 
to the spatial $H^1$-seminorm.

\begin{lemma}\label{lem_20200630_1}
Let $u$ be the strong solution to problem \eqref{sac_s}--\eqref{sac_s2}. Under the assumptions \eqref{eq20210627_1}-\eqref{gu_grow}, for any $s,t \in [0,T]$ with $s < t$, we have
\begin{align*}
\E \big[ \|\nabla (u(t)-u(s)) \|_{L^2}^2 \big]  \leq C(t-s)^2,
\end{align*}
where
\begin{align*}
C =\,& C\E \big[ \|\nabla u_t(0)\|_{L^2}^2\big]+C\E \big[ \int_0^t\|\nabla \Delta u(\zeta)\|_{L^2}^2d\zeta\big] + C\E \big[ \int_0^t \|u(\zeta)\|_{L^{4(q-1)}}^{4(q-1)}d\zeta\big]\\
&+C\E \big[\int_0^t\|\nabla u(\zeta)\|_{L^4}^4d\zeta\big]+C.
\end{align*}
\end{lemma}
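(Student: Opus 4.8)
The plan is to follow the same template as Lemma~\ref{lem_20201226_1}: rather than estimating $\E[\|\nabla(u(t)-u(s))\|_{L^2}^2]$ directly, I would first establish a uniform-in-time bound on $\E[\|\nabla u_t(t)\|_{L^2}^2]$ and then transfer it to the increment via the mean value theorem in time. Concretely, starting from the integrated form \eqref{eq20201226_1} of the SPDE and applying the spatial gradient, I obtain
\begin{align*}
\nabla u_t(t)-\nabla u_t(0)=\int_0^t\nabla\Delta u\,d\zeta+\int_0^t\nabla f(u)\,d\zeta+\int_0^t\nabla g(u)\,dW(\zeta).
\end{align*}
Squaring, integrating over $\mathcal{D}$, taking expectation, and then invoking the triangle inequality, the Schwarz inequality in time (so that $(\int_0^t\cdot\,d\zeta)^2\le t\int_0^t(\cdot)^2\,d\zeta$ with $t\le T$), and the It\^o isometry for the stochastic integral, I would arrive at
\begin{align*}
\E\big[\|\nabla u_t(t)\|_{L^2}^2\big]\le{}& C\E\big[\|\nabla u_t(0)\|_{L^2}^2\big]+C\E\Big[\int_0^t\|\nabla\Delta u\|_{L^2}^2\,d\zeta\Big]\\
&+C\E\Big[\int_0^t\|\nabla f(u)\|_{L^2}^2\,d\zeta\Big]+C\E\Big[\int_0^t\|\nabla g(u)\|_{L^2}^2\,d\zeta\Big].
\end{align*}

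The crux is to bound the two nonlinear contributions so that they reproduce exactly the norms appearing in the claimed constant. For the drift, using the chain rule $\nabla f(u)=f'(u)\nabla u$ with $f'(u)=\sum_{j=1}^q j\,a_j(x)u^{j-1}$, the dominant term is $u^{q-1}\nabla u$, which I would estimate by H\"older's inequality with conjugate exponents $2,2$,
\begin{align*}
\|u^{q-1}\nabla u\|_{L^2}^2\le\|u\|_{L^{4(q-1)}}^{2(q-1)}\|\nabla u\|_{L^4}^2,
\end{align*}
followed by Young's inequality to split this into $C\|u\|_{L^{4(q-1)}}^{4(q-1)}+C\|\nabla u\|_{L^4}^4$; this is precisely where the exponents $4(q-1)$ and $4$ in the statement originate, and the lower powers $j<q$ together with the $a_j$-terms are of lower growth and can be absorbed into the generic constant using the bounded embeddings and \eqref{eq:stability-continuous}. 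For the diffusion, $\nabla g(u)=g'(u)\nabla u$ with $|g'|$ bounded (a consequence of \eqref{eq20210627_2} and \eqref{eq20210627_3}) gives $\|\nabla g(u)\|_{L^2}^2\le C\|\nabla u\|_{L^2}^2$, whose time integral is controlled by the a priori bound \eqref{eq:stability-continuous} and therefore contributes only the trailing $+C$.

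Combining these estimates yields $\E[\|\nabla u_t(t)\|_{L^2}^2]\le C$ uniformly in $t\in[0,T]$ with $C$ as stated. To finish, I would apply the mean value theorem in time pointwise in $x$, writing $\nabla u(t,x)-\nabla u(s,x)=\nabla u_t(\xi,x)(t-s)$ for some $\xi\in(s,t)$, so that $\|\nabla(u(t)-u(s))\|_{L^2}^2=\|\nabla u_t(\xi)\|_{L^2}^2(t-s)^2$, and taking expectation gives the $(t-s)^2$ rate. The main obstacle I anticipate is twofold: first, justifying that the spatial gradient may be commuted with the deterministic and stochastic time integrals, which requires the solution to carry the extra spatial regularity implicit in the finiteness of $C$ (in particular $\nabla\Delta u\in L^2$); and second, balancing the superlinear term $u^{q-1}\nabla u$ through the correct H\"older/Young pairing so that the resulting norms match the claimed $\|u\|_{L^{4(q-1)}}^{4(q-1)}$ and $\|\nabla u\|_{L^4}^4$ rather than weaker or uncontrollable quantities.
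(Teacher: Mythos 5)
Your proposal follows exactly the paper's argument: take the gradient of the integrated SPDE, bound $\E[\|\nabla u_t(t)\|_{L^2}^2]$ uniformly via the triangle inequality, Schwarz inequality in time and It\^o isometry (with the drift term split by H\"older/Young into $\|u\|_{L^{4(q-1)}}^{4(q-1)}+\|\nabla u\|_{L^4}^4$ and the diffusion term absorbed using the boundedness of $g'$), and then convert to the increment bound $C(t-s)^2$ by the mean value theorem in time. The extra detail you supply on the H\"older pairing and on commuting the gradient with the integrals is a faithful elaboration of steps the paper leaves implicit, so this is the same proof, correctly executed.
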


\begin{proof}
From the equation \eqref{sac_s}, we get
\begin{align}\label{eq20200629_1}
u_t(t)-u_t(0)=\int_0^t\Delta ud\zeta+\int_0^tf(u)d\zeta+\int_0^tg(u)dW(\zeta).
\end{align}
Taking the gradient, the square, the spatial integral, and the expectation on both sides of \eqref{eq20200629_1}, and then using the triangle inequality, the Schwarz inequality, and It\^{o} isometry, we obtain
\begin{align}\label{eq20200629_2}
&\E \big[ \|\nabla u_t(t)\|_{L^2}^2\big]\\
&\le C\E \big[ \|\nabla u_t(0)\|_{L^2}^2\big]+C\E \big[ \int_{\mathcal{D}}\bigl(\int_0^t\nabla \Delta u(\zeta)d\zeta\bigr)^2dx\big]\notag\\
&\qquad+C\E \big[ \int_{\mathcal{D}}\bigl(\int_0^t\nabla f(u(\zeta))d\zeta\bigr)^2dx\big]+C\E \big[ \int_{\mathcal{D}}\bigl(\int_0^t\nabla g(u(\zeta))dW(\zeta)\bigr)^2dx\big]\notag\\
&\le C\E \big[ \|\nabla u_t(0)\|_{L^2}^2\big]+C\E \big[ \int_0^t\|\nabla \Delta u(\zeta)\|_{L^2}^2d\zeta\big]\notag\\
&\qquad+C\E \big[ \int_0^t\|\nabla f(u(\zeta))\|_{L^2}^2d\zeta\big]+C\E \big[\int_0^t\|\nabla g(u(\zeta))\|_{L^2}^2d\zeta\big]\notag\\
&\le C\E \big[ \|\nabla u_t(0)\|_{L^2}^2\big]+C\E \big[ \int_0^t\|\nabla \Delta u(\zeta)\|_{L^2}^2d\zeta\big]\notag\\
&\qquad+C\E \big[ \int_0^t \|u(\zeta)\|_{L^{4(q-1)}}^{4(q-1)}d\zeta\big]+C\E \big[\int_0^t\|\nabla u(\zeta)\|_{L^4}^4d\zeta\big]+C\notag.
\end{align} 
Therefore, for any $s,t \in [0,T]$ with $s < t$, we have
\begin{align}\label{eq20200629_3}
\E \big[ \|\nabla (u(t)-u(s)) \|_{L^2}^2 \big] =& \E \big[ \|\nabla u_t(\xi)\|_{L^2}^2 \big] (t-s)^2\\
\le & C(t-s)^2\notag,
\end{align}
where $\xi\in(s,t)$ and
\begin{align*}
C =\,& C\E \big[ \|\nabla u_t(0)\|_{L^2}^2\big]+C\E \big[ \int_0^t\|\nabla \Delta u(\zeta)\|_{L^2}^2d\zeta\big] + C\E \big[ \int_0^t \|u(\zeta)\|_{L^{4(q-1)}}^{4(q-1)}d\zeta\big]\\
&+C\E \big[\int_0^t\|\nabla u(\zeta)\|_{L^4}^4d\zeta\big]+C.
\end{align*} 
This finishes the proof of the lemma.
\end{proof}

\begin{lemma}\label{lem_20201226_4}
Let $u$ be the strong solution to problem \eqref{sac_s}--\eqref{sac_s2}. Under the assumptions \eqref{eq20210627_1}-\eqref{gu_grow}, for any $s,t \in [0,T]$ with $s < t$, we have
\begin{align*}
\E \big[ \| \nabla(v(t)-v(s)) \|_{L^2}^2 \big]  \leq C(t-s),
\end{align*}
where
\begin{align*}
C =\,&C\E \big[ \int_s^t\|\nabla\Delta u(\zeta)\|_{L^2}^2d\zeta\big]+C\E \big[ \int_s^t \|u(\zeta)\|_{L^{4(q-1)}}^{4(q-1)}d\zeta\big]\\
&+C\sup_{s \leq \zeta \leq t}\E \big[\|\nabla u(\zeta)\|_{L^4}^4\big]+C.
\end{align*}
\end{lemma}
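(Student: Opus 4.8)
The plan is to prove this lemma in complete parallel with Lemma~\ref{lem_20201226_2}, but now tracking the gradient as was done for the spatial $H^1$-seminorm in Lemma~\ref{lem_20200630_1}. Starting from \eqref{sac_s} and recalling $v=u_t$, the increment $v(t)-v(s)$ admits the integral representation
\begin{align*}
v(t)-v(s)=\int_s^t\Delta u\,d\zeta+\int_s^tf(u)\,d\zeta+\int_s^tg(u)\,dW(\zeta),
\end{align*}
to which I would apply $\nabla$, square, integrate over $\D$, and take expectation. A triangle inequality then splits the estimate into the three contributions coming from $\nabla\Delta u$, $\nabla f(u)$, and $\nabla g(u)$.

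For the two deterministic integrals I would apply the Cauchy--Schwarz inequality in the time variable, which produces an explicit factor $(t-s)$ in front of $\E\big[\int_s^t\|\nabla\Delta u\|_{L^2}^2\,d\zeta\big]$ and of the corresponding time integral of $\|\nabla f(u)\|_{L^2}^2$. For the stochastic integral I would instead invoke the It\^o isometry, which converts it into $\E\big[\int_s^t\|\nabla g(u)\|_{L^2}^2\,d\zeta\big]$ \emph{without} any extra $(t-s)$ factor. This single power of $(t-s)$ supplied by the It\^o term, rather than the quadratic factor coming from the mean value theorem in Lemmas~\ref{lem_20201226_1} and~\ref{lem_20200630_1}, is precisely what yields the linear rate $C(t-s)$, and it is therefore the dominant and most delicate contribution.

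It remains to estimate the integrands. For the nonlinear term I would use the chain rule $\nabla f(u)=f'(u)\nabla u$ (up to lower-order coefficient terms, treated as in Lemma~\ref{lem_20200630_1}), whose leading piece is $u^{q-1}\nabla u$; applying H\"older's inequality with exponents $2,2$ and then Young's inequality gives
\begin{align*}
\|u^{q-1}\nabla u\|_{L^2}^2\le\|u\|_{L^{4(q-1)}}^{2(q-1)}\|\nabla u\|_{L^4}^2\le C\|u\|_{L^{4(q-1)}}^{4(q-1)}+C\|\nabla u\|_{L^4}^4,
\end{align*}
which is exactly the source of the $\|u\|_{L^{4(q-1)}}^{4(q-1)}$ and $\|\nabla u\|_{L^4}^4$ terms in the constant. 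For the stochastic integrand I would use $\nabla g(u)=g'(u)\nabla u$ together with the global Lipschitz bound $|g'|\le C$ implied by \eqref{eq20210627_2}, so that $\|\nabla g(u)\|_{L^2}^2\le C\|\nabla u\|_{L^2}^2$; bounding the time integral of this quantity by $(t-s)\sup_{s\le\zeta\le t}\E[\|\nabla u\|_{L^2}^2]$ and using $\|\nabla u\|_{L^2}^2\le C(1+\|\nabla u\|_{L^4}^4)$ on the bounded domain $\D$ produces the supremum term $\sup_{s\le\zeta\le t}\E[\|\nabla u(\zeta)\|_{L^4}^4]$ and the additive constant in the statement.

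Collecting the three estimates gives $\E[\|\nabla(v(t)-v(s))\|_{L^2}^2]\le C(t-s)$ with the stated $C$. The main obstacle is the nonlinear gradient term $u^{q-1}\nabla u$: unlike the deterministic $\nabla\Delta u$ term, it couples the super-linear growth in $u$ with $\nabla u$, and the correct pairing of Lebesgue exponents (so that the resulting norms $\|u\|_{L^{4(q-1)}}$ and $\|\nabla u\|_{L^4}$ are the ones appearing in $C$) is the point requiring care. A secondary subtlety is that, because the It\^o isometry leaves only one power of $(t-s)$, the $\|\nabla u\|_{L^4}^4$ factor generated by the stochastic term must be controlled by its supremum in time rather than by a further Cauchy--Schwarz splitting; the analogous contribution arising from the deterministic $\nabla f(u)$ term carries an extra $(t-s)$ and is absorbed using $(t-s)\le T$.
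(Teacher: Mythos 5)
Your proposal follows essentially the same route as the paper's own proof: the same integral representation of $v(t)-v(s)$, the triangle/Cauchy--Schwarz inequalities giving the $(t-s)$ factor on the deterministic integrals, the It\^o isometry supplying the single power of $(t-s)$ from the noise term, the H\"older--Young estimate $\|u^{q-1}\nabla u\|_{L^2}^2\le C\|u\|_{L^{4(q-1)}}^{4(q-1)}+C\|\nabla u\|_{L^4}^4$ for $\nabla f(u)$, and the Lipschitz bound on $g'$ combined with the embedding on the bounded domain to produce the supremum term $\sup_{s\le\zeta\le t}\E\big[\|\nabla u(\zeta)\|_{L^4}^4\big]$. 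The argument is correct and matches the paper's proof in both structure and the individual estimates.
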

\begin{proof}
From the SPDE \eqref{sac_s}, for any $s,t \in [0,T]$ with $s < t$, we have
\begin{align}\label{eq20201226_6}
v(t)-v(s)=\int_s^t\Delta ud\zeta+\int_s^tf(u)d\zeta+\int_s^tg(u)dW(\zeta).
\end{align}
Taking the gradient, the square, the spatial integral, and the expectation on both sides of \eqref{eq20200629_1}, and then using the triangle inequality, the Schwarz inequality, and It\^{o} isometry, we obtain
\begin{align}\label{eq20201226_7}
&\E \big[ \| \nabla(v(t)-v(s)) \|_{L^2}^2 \big] \\
&\le C\E \big[ \int_s^t\| \nabla\Delta u(\zeta)\|_{L^2}^2d\zeta\big](t-s)\notag\\
&\qquad+C\E \big[ \int_s^t\| \nabla f(u(\zeta))\|_{L^2}^2d\zeta\big](t-s)+C\E \big[\int_s^t\| \nabla g(u(\zeta))\|_{L^2}^2d\zeta\big]\notag,\\
&\le C\E \big[ \int_s^t\|\nabla \Delta u(\zeta)\|_{L^2}^2d\zeta\big](t-s)\notag\\
&\qquad+C\E \big[ \int_s^t \|u(\zeta)\|_{L^{4(q-1)}}^{4(q-1)}d\zeta\big](t-s)+C\E \big[\int_s^t\|\nabla u(\zeta)\|_{L^4}^4d\zeta\big]+C(t-s)\notag,\\
&\le C(t-s),\notag
\end{align} 
where
\begin{align*}
C =\,&C\E \big[ \int_s^t\|\nabla\Delta u(\zeta)\|_{L^2}^2d\zeta\big]+C\E \big[ \int_s^t \|u(\zeta)\|_{L^{4(q-1)}}^{4(q-1)}d\zeta\big]\\
&+C\sup_{s \leq \zeta \leq t}\E \big[\|\nabla u(\zeta)\|_{L^4}^4\big]+C.
\end{align*} 
This finishes the proof of the lemma.
%
\end{proof}

At the end of this section, we prove the H\"older continuity in time for the strong solution $u$ with respect 
to the spatial $H^2$-seminorm.
\begin{lemma}\label{lem_20200717_1}
Let $u$ be the strong solution to problem \eqref{sac_s}--\eqref{sac_s2}. Under the assumptions \eqref{eq20210627_1}-\eqref{eq20210627_3}, for any $s,t \in [0,T]$ with $s < t$, we have
\begin{align*}
\E \big[ |\nabla^2(u(t)-u(s))|_{L^2}^2 \big]  \leq C(t-s)^2,
\end{align*}
where
\begin{align*}
C =\,& C\E \big[ \|\nabla^2 u_t(0)\|_{L^2}^2\big]+C\E \big[ \int_0^t\|\nabla^2 \Delta u(\zeta)\|_{L^2}^2d\zeta\big]+C\E \big[ \int_0^t \|u(\zeta)\|_{L^{4(q-1)}}^{4(q-1)}d\zeta\big]\\
&+C\E \big[\int_0^t\|\nabla^2 u(\zeta)\|_{L^4}^4d\zeta\big]+C\E \big[\int_0^t\|\nabla u(\zeta)\|_{L^8}^8d\zeta\big]+C.
\end{align*}
\end{lemma}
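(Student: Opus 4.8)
The plan is to follow the same template as the proof of Lemma \ref{lem_20200630_1}, now applying the Hessian operator $\nabla^2$ in place of the gradient, and then to pass from a bound on $\E \big[\|\nabla^2 u_t(t)\|_{L^2}^2\big]$ to the claimed estimate via the mean value theorem in time. Starting from the integrated equation
\begin{align*}
u_t(t)-u_t(0)=\int_0^t\Delta u\,d\zeta+\int_0^tf(u)\,d\zeta+\int_0^tg(u)\,dW(\zeta),
\end{align*}
I would apply $\nabla^2$, take the $L^2$ norm squared, integrate over $\mathcal{D}$, take the expectation, and split into four contributions by the triangle inequality. The linear term $\int_0^t\nabla^2\Delta u\,d\zeta$ is handled by the Schwarz inequality in time exactly as before, producing the $\E \big[\int_0^t\|\nabla^2\Delta u\|_{L^2}^2d\zeta\big]$ contribution, while the initial datum yields the $\E \big[\|\nabla^2 u_t(0)\|_{L^2}^2\big]$ term and the stochastic integral is treated by the It\^o isometry.

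The genuinely new ingredient, and the main obstacle, is the control of $\nabla^2 f(u)$, since the second spatial derivative of a composition produces a nonlinear gradient-squared term that was absent in the $H^1$ estimate. By the chain rule, $\nabla^2 f(u)=f''(u)\,\nabla u\otimes\nabla u+f'(u)\nabla^2 u$, where $f'(u)\sim u^{q-1}$ and $f''(u)\sim u^{q-2}$ up to bounded coefficients $a_j(x)$. I would estimate the two pieces separately. For $\|f'(u)\nabla^2 u\|_{L^2}^2$, Young's inequality gives $\int|u|^{2(q-1)}|\nabla^2 u|^2\,dx\le C\|u\|_{L^{4(q-1)}}^{4(q-1)}+C\|\nabla^2 u\|_{L^4}^4$, which is exactly the source of the $L^{4(q-1)}$ and $\|\nabla^2 u\|_{L^4}^4$ terms in the stated constant. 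For $\|f''(u)(\nabla u)^2\|_{L^2}^2$, Young's inequality gives $\int|u|^{2(q-2)}|\nabla u|^4\,dx\le C\|u\|_{L^{4(q-2)}}^{4(q-2)}+C\|\nabla u\|_{L^8}^8$; since $\mathcal{D}$ is bounded, the lower-exponent factor $\|u\|_{L^{4(q-2)}}^{4(q-2)}$ is absorbed into the already-present $\|u\|_{L^{4(q-1)}}^{4(q-1)}$ term (up to an additive constant), leaving the remaining $\|\nabla u\|_{L^8}^8$ contribution.

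For the stochastic term I would invoke the It\^o isometry to reduce $\E \big[\int_{\mathcal{D}}\bigl(\int_0^t\nabla^2 g(u)\,dW\bigr)^2dx\big]$ to $\E \big[\int_0^t\|\nabla^2 g(u)\|_{L^2}^2d\zeta\big]$. Here the assumption \eqref{eq20210627_3} on $|g''|$ is essential and is precisely why this lemma requires \eqref{eq20210627_1}--\eqref{eq20210627_3} rather than only \eqref{eq20210627_1}--\eqref{gu_grow}: writing $\nabla^2 g(u)=g''(u)(\nabla u)^2+g'(u)\nabla^2 u$ and using the boundedness of $g'$ and $g''$ gives $\|\nabla^2 g(u)\|_{L^2}^2\le C\|\nabla u\|_{L^4}^4+C\|\nabla^2 u\|_{L^2}^2$, both of which are dominated on the bounded domain $\mathcal{D}$ by the already-present $\|\nabla u\|_{L^8}^8$ and $\|\nabla^2 u\|_{L^4}^4$ terms, up to additive constants. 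Combining these bounds controls $\E \big[\|\nabla^2 u_t(t)\|_{L^2}^2\big]$ by the constant $C$ in the statement. Finally, writing $\nabla^2(u(t)-u(s))=\nabla^2 u_t(\xi)\,(t-s)$ for some $\xi\in(s,t)$ by the mean value theorem in time yields $\E \big[|\nabla^2(u(t)-u(s))|_{L^2}^2\big]=\E \big[\|\nabla^2 u_t(\xi)\|_{L^2}^2\big](t-s)^2\le C(t-s)^2$, which completes the proof.
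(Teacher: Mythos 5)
Your proposal follows the same route as the paper's proof: apply $\nabla^2$ to the integrated equation, estimate the four contributions via the triangle inequality, the Schwarz inequality in time, and the It\^{o} isometry, then conclude with the mean value theorem in time $\nabla^2(u(t)-u(s))=\nabla^2 u_t(\xi)(t-s)$. In fact you supply more detail than the paper does — the explicit chain-rule decompositions $\nabla^2 f(u)=f''(u)\nabla u\otimes\nabla u+f'(u)\nabla^2 u$ and $\nabla^2 g(u)=g''(u)(\nabla u)^2+g'(u)\nabla^2 u$, with the Young-inequality absorptions, are exactly where the paper's terms $\|u\|_{L^{4(q-1)}}^{4(q-1)}$, $\|\nabla^2 u\|_{L^4}^4$, and $\|\nabla u\|_{L^8}^8$ come from, and your observation that \eqref{eq20210627_3} is needed precisely for the $g''$ piece is correct.
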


\begin{proof}
Again, from the equation \eqref{sac_s}, we get
\begin{align}\label{eq20200717_1}
u_t(t)-u_t(0)=\int_0^t\Delta ud\zeta+\int_0^tf(u)d\zeta+\int_0^tg(u)dW(\zeta).
\end{align}
Taking the Hessian, the square, the spatial integral, and the expectation on both sides of \eqref{eq20200629_1}, and then using the triangle inequality, the Schwarz inequality, and It\^{o} isometry, we obtain
\begin{align}\label{eq20200717_2}
&\E \big[ \|\nabla^2 u_t(t)\|_{L^2}^2\big]\\
&\le C\E \big[ \|\nabla^2 u_t(0)\|_{L^2}^2\big]+C\E \big[ \int_{\mathcal{D}}\bigl(\int_0^t\nabla^2 \Delta u(\zeta)d\zeta\bigr)^2dx\big]\notag\\
&\qquad+C\E \big[ \int_{\mathcal{D}}\bigl(\int_0^t\nabla^2 f(u(\zeta))d\zeta\bigr)^2dx\big]+C\E \big[ \int_{\mathcal{D}}\bigl(\int_0^t\nabla^2 g(u(\zeta))dW(\zeta)\bigr)^2dx\big]\notag\\
&\le C\E \big[ \|\nabla^2 u_t(0)\|_{L^2}^2\big]+C\E \big[ \int_0^t\|\nabla^2 \Delta u(\zeta)\|_{L^2}^2d\zeta\big]\notag\\
&\qquad+C\E \big[ \int_0^t\|\nabla^2 f(u(\zeta))\|_{L^2}^2d\zeta\big]+C\E \big[\int_0^t\|\nabla^2 g(u(\zeta))\|_{L^2}^2d\zeta\big]\notag\\
&\le C\E \big[ \|\nabla^2 u_t(0)\|_{L^2}^2\big]+C\E \big[ \int_0^t\|\nabla^2 \Delta u(\zeta)\|_{L^2}^2d\zeta\big]\notag\\
&\qquad+C\E \big[ \int_0^t \|u(\zeta)\|_{L^{4(q-1)}}^{4(q-1)}d\zeta\big]+C\E \big[\int_0^t\|\nabla^2 u(\zeta)\|_{L^4}^4d\zeta\big]\notag\\
&\qquad+C\E \big[\int_0^t\|\nabla u(\zeta)\|_{L^8}^8d\zeta\big]+C\notag.
\end{align} 
Therefore, for any $s,t \in [0,T]$ with $s < t$, we have
\begin{align}\label{eq20200823_1}
\E \big[ \|\nabla^2 (u(t)-u(s)) \|_{L^2}^2 \big] =& \E \big[ \|\nabla^2 u_t(\xi)\|_{L^2}^2 \big] (t-s)^2\\
\le & C(t-s)^2\notag,
\end{align}
where $\xi\in(s,t)$ and
\begin{align*}
C =& C\E \big[ \|\nabla^2 u_t(0)\|_{L^2}^2\big]+C\E \big[ \int_0^t\|\nabla^2 \Delta u(\zeta)\|_{L^2}^2d\zeta\big]+C\E \big[ \int_0^t \|u(\zeta)\|_{L^{4(q-1)}}^{4(q-1)}d\zeta\big]\\
&+C\E \big[\int_0^t\|\nabla^2 u(\zeta)\|_{L^4}^4d\zeta\big]+C\E \big[\int_0^t\|\nabla u(\zeta)\|_{L^8}^8d\zeta\big]+C.
\end{align*} 
This finishes the proof of the lemma.
\end{proof}



\section{Fully discrete finite element methods and stability estimates}\label{sec-3}

In this section, we start by presenting the fully discrete finite
element methods for the stochastic wave equations
\eqref{sac_s}--\eqref{eq20191102_1}, and then establish several
stability estimates of the numerical solutions.  In addition to the second moment stability in $L^2$ norm and energy  norms of the discrete numerical solutions, 
the stability of higher order moments is also provided.

\subsection{Notations and the finite element methods}
Let $\mathcal{T}_h$ be a quasi-uniform triangulation of the domain $\mathcal{D}$.
We consider the $\mathcal{P}_r$-Lagrangian finite element space
\begin{align}\label{eq20180713_1}
V_h = \bigl\{v_h \in C(\bar{\mathcal{D}}): v_h|_{K} \in \mathcal{P}_r(K), \,\, \forall K\in\mathcal{T}_h  	\bigr\},
\end{align}
where $\bar{\mathcal{D}}$ is the closure of the domain  $\mathcal{D}$, $P_r(K)$ denotes the space of all polynomials of degrees up to $r$ on $K$, and 
$r\ge1$ is an integer. Consider a uniform partition of the time domain $[0,\,T]$ with $\tau = T/N$, and denote $t_n=n\tau$ for $n=0,1,\cdots N$.

The fully discretized numerical methods for \eqref{sac_s} is to
seek an $\mathcal{F}_{t_n}$ adapted $V_h$-valued process
$\{u_h^n\}_{n=0}^N$ such that it holds $\mathbb{P}$-almost surely that:
\begin{align}\label{dfem}
&\left(\frac{u^{n+1}_h-2u^{n}_h+u^{n-1}_h}{\tau}, w_h\right) 
+  \tau(\nabla u^{n+1}_h, \nabla w_h) \\ 
&\qquad\qquad\qquad
 = \tau(f^{n+1}_h, w_h)+ (g(u^n_h), w_h) \, \bar{\Delta} W_{n+1} \qquad \forall \, w_h \in V_h \notag,
\end{align}
where the notation $\bar{\Delta}W_{n+1}$ is defined by 
\begin{equation}\label{deltaW}
\bar{\Delta}W_{n+1} :=
W(t_{n+1}) - W(t_n) \sim \mathcal{N}(0,\tau),
\end{equation}
and there are two choices for the discretization of the nonlinear drift term:
\begin{enumerate}
\item Fully implicit discretization: 
\begin{equation} \label{eq:fully-implicit}
f^{n+1}_h:=f(u_h^{n+1}).
\end{equation}
\item Modified Crank-Nicolson discretization: 
\begin{equation}\label{eq:CN}
f_h^{n+1} := \hat{f}(u_h^{n+1},u_h^n) =
\begin{cases}
-\frac{F(u_h^{n+1})-F(u_h^n)}{u_h^{n+1}-u_h^n}\quad &\mathrm{if}\ u_h^{n+1}\neq u_h^n,\\
f(u_h^{n+1})\quad &\mathrm{if}\ u_h^{n+1}=u_h^n,
\end{cases}
\end{equation}
where $F(\cdot)$ is defined in \eqref{eq20191102_2}.
\end{enumerate}

The finite element method \eqref{dfem} involves a two-step implicit
temporal discretization, and would need two initial conditions $u_h^0$
and $u_h^{-1}$ to start. The initial condition $u_h^0=P_h u(x,0)$ is
obtained via a standard $L^2$-projection operator defined as $P_h:
L^2(\D) \longrightarrow V_h$ satisfying
\begin{align*}
\left(P_h u, w_h\right) = (u, w_h) \qquad \forall w_h \in V_h,
\end{align*}
and $u_h^{-1} = u_h^0 - \tau P_h u_t(0,x)$, namely, the backward Euler method is used for the initial step. 
The discrete Laplace operator $\Delta_h: {V}_h\mapsto {V}_h$ is defined
as follows: given $z_h \in {V}_h$, $\Delta_h z_h \in V_h$ is chosen such
that
\begin{equation} \label{eq:discrete-Laplace}
(\Delta_h z_h, w_h)=-(\nabla z_h,\nabla w_h) \qquad \forall\, w_h\in
V_h.
\end{equation}

\subsection{Stability in $L^2$ norm and energy norms}
For the deterministic wave equation (i.e., $g =0$ in \eqref{sac_s}), 
it is well-known that this model preserves the Hamiltonian, defined as  
$$
\mathcal{H}(u) := \frac{1}{2}\|u_t\|_{L^2}^2 + \frac{1}{2}\|\nabla u\|_{L^2}^2 +
(F(u), 1),
$$
where $F(u)$ satisfies the condition \eqref{eq20191102_2}.
The discrete analogue of the Hamiltonian is defined as 
\begin{equation} \label{eq:Hamiltonian-h}
\tilde{\mathcal H}(u_h^n) := \frac{1}{2}\|d_t u_h^n\|_{L^2}^2 + \frac{1}{2}\|\nabla u_h^n\|_{L^2}^2 +(F(u_h^n), 1),
\end{equation}
where $d_t$ denotes the temporal difference operator defined by
\begin{align}\label{eq20180815_3}
d_tu_h^{n}=\frac{u_h^{n}-u_h^{n-1}}{\tau}.
\end{align}

Before stating the stability estimate, we summarize the assumption on the nonlinear drift term below. 
\begin{assumption} \label{assumption}
The nonlinear drift term $f(u)$ given in \eqref{eq20191102_1} satisfies \eqref{eq20191102_2}. 
Furthermore, $F(\cdot)$ is convex if the fully implicit discretization \eqref{eq:fully-implicit} of the nonlinear term is utilized.
\end{assumption} 

\begin{theorem}\label{thm20180815_1}
Let $\{u_h^\ell\}_{\ell=0}^{N}$ denote the numerical solutions of the
finite element methods \eqref{dfem}.  Under the Assumption
\ref{assumption}, the following inequality holds for any integer
$\ell\in[1,N]$,
\begin{align*}
&\frac12\E\left[ \|d_tu_h^{\ell}\|_{L^2}^2\right] +
\frac12\E\left[\|\nabla u_h^{\ell}\|_{L^2}^2\right] +(F(u_h^\ell), 1)\\
&\qquad\qquad\qquad
+\frac14\sum_{n=0}^{\ell-1}\E\left[\|d_tu_h^{n+1}-d_tu_h^n\|_{L^2}^2\right]
+\frac12\sum_{n=0}^{\ell-1}\E\left[\|\nabla(u_h^{n+1}-u_h^n)\|_{L^2}^2\right] \\
&\quad
=\E\left[\tilde{\mathcal H}(u_h^\ell)\right]
+\frac14\sum_{n=0}^{\ell-1}\E\left[\|d_tu_h^{n+1}-d_tu_h^n\|_{L^2}^2\right]
+\frac12\sum_{n=0}^{\ell-1}\E\left[\|\nabla(u_h^{n+1}-u_h^n)\|_{L^2}^2\right] \le C\notag.
\end{align*}
\end{theorem}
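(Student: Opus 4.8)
The plan is to run a discrete energy argument driven by the test function $w_h=d_tu_h^{n+1}$, the discrete analogue of testing the continuous equation against $u_t$. I would first note that $\frac{u_h^{n+1}-2u_h^n+u_h^{n-1}}{\tau}=d_tu_h^{n+1}-d_tu_h^n$, so inserting $w_h=d_tu_h^{n+1}$ into \eqref{dfem} and applying the elementary identity $(a-b,a)=\frac12(\|a\|_{L^2}^2-\|b\|_{L^2}^2+\|a-b\|_{L^2}^2)$ twice converts the first two terms into telescoping energy differences plus nonnegative remainders: with $a=d_tu_h^{n+1}$, $b=d_tu_h^n$ the inertial term gives $\frac12(\|d_tu_h^{n+1}\|_{L^2}^2-\|d_tu_h^n\|_{L^2}^2)+\frac12\|d_tu_h^{n+1}-d_tu_h^n\|_{L^2}^2$, and since $\tau(\nabla u_h^{n+1},\nabla d_tu_h^{n+1})=(\nabla u_h^{n+1},\nabla u_h^{n+1}-\nabla u_h^n)$ the elastic term gives $\frac12(\|\nabla u_h^{n+1}\|_{L^2}^2-\|\nabla u_h^n\|_{L^2}^2)+\frac12\|\nabla(u_h^{n+1}-u_h^n)\|_{L^2}^2$.

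The drift term is $\tau(f_h^{n+1},d_tu_h^{n+1})=(f_h^{n+1},u_h^{n+1}-u_h^n)$, which must be matched to the increment of $(F(u_h^n),1)$. In the modified Crank--Nicolson case \eqref{eq:CN} this is exact, $(\hat f(u_h^{n+1},u_h^n),u_h^{n+1}-u_h^n)=-(F(u_h^{n+1})-F(u_h^n),1)$ by definition of $\hat f$; in the fully implicit case \eqref{eq:fully-implicit}, using $f=-F'$ and the convexity of $F$ from Assumption \ref{assumption}, one has $(f(u_h^{n+1}),u_h^{n+1}-u_h^n)\le-(F(u_h^{n+1})-F(u_h^n),1)$, which is the correct direction. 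Summing over $n=0,\dots,\ell-1$ telescopes both the energy and the $F$-increments, leaving on the left the discrete Hamiltonian $\tilde{\mathcal H}(u_h^\ell)$ together with the dissipation sums $\frac12\sum_n\|d_tu_h^{n+1}-d_tu_h^n\|_{L^2}^2$ and $\frac12\sum_n\|\nabla(u_h^{n+1}-u_h^n)\|_{L^2}^2$, and on the right the initial energy $\tilde{\mathcal H}(u_h^0)$ plus the stochastic sum $\sum_{n=0}^{\ell-1}(g(u_h^n),d_tu_h^{n+1})\,\bar{\Delta}W_{n+1}$.

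The main obstacle is the expectation of this stochastic sum, delicate precisely because the implicitness couples $d_tu_h^{n+1}$ to $\bar{\Delta}W_{n+1}$, so the summand is not a mean-zero increment. The decisive device is the splitting $d_tu_h^{n+1}=d_tu_h^n+(d_tu_h^{n+1}-d_tu_h^n)$. Since $u_h^n,u_h^{n-1}$ are $\F_{t_n}$-measurable, so is $(g(u_h^n),d_tu_h^n)$, whence $\E[(g(u_h^n),d_tu_h^n)\bar{\Delta}W_{n+1}]=\E[(g(u_h^n),d_tu_h^n)\,\E[\bar{\Delta}W_{n+1}\mid\F_{t_n}]]=0$. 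For the fluctuation piece I would use Cauchy--Schwarz and Young's inequality to obtain $(g(u_h^n),d_tu_h^{n+1}-d_tu_h^n)\bar{\Delta}W_{n+1}\le\frac14\|d_tu_h^{n+1}-d_tu_h^n\|_{L^2}^2+\|g(u_h^n)\|_{L^2}^2|\bar{\Delta}W_{n+1}|^2$; the first term is absorbed into the $\frac12$-dissipation already present on the left, which is exactly what lowers its coefficient to the stated $\frac14$. The absorption respects the time scaling because $|\bar{\Delta}W_{n+1}|^2$ is of order $\tau$: by independence $\E[\|g(u_h^n)\|_{L^2}^2|\bar{\Delta}W_{n+1}|^2]=\tau\,\E[\|g(u_h^n)\|_{L^2}^2]$, and the growth condition \eqref{gu_grow} bounds this by $C\tau(1+\E[\|u_h^n\|_{L^2}^2])$, a Gronwall-friendly quantity.

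Finally I would close the estimate by discrete Gronwall. Denoting by $\Phi_\ell$ the full left-hand side of the asserted inequality, the previous steps give $\Phi_\ell\le\E[\tilde{\mathcal H}(u_h^0)]+C\tau\sum_{n=0}^{\ell-1}(1+\E[\|u_h^n\|_{L^2}^2])$ for every $\ell$. The coercivity \eqref{eq20191102_2} of $F$, together with the embedding $L^{q+1}(\D)\hookrightarrow L^2(\D)$ on the bounded domain, yields $\E[\|u_h^n\|_{L^2}^2]\le C(1+\E[\tilde{\mathcal H}(u_h^n)])\le C(1+\Phi_n)$, so that $\Phi_\ell\le\E[\tilde{\mathcal H}(u_h^0)]+CT+C\tau\sum_{n=0}^{\ell-1}\Phi_n$. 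Since $\E[\tilde{\mathcal H}(u_h^0)]$ is bounded in terms of the data through the $L^2$-stability of $P_h$, the discrete Gronwall inequality gives $\Phi_\ell\le C$ uniformly in $\ell$ and $\tau$, which is the claim. I expect the noise-term absorption in the third step to be the crux, since it is where the $\frac14$ coefficient, the measurability/independence structure, and the $O(\tau)$ scaling of the noise must all fit together.
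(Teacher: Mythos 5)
Your proposal is correct and follows essentially the same route as the paper's proof: the test function $w_h=d_tu_h^{n+1}$, the two energy identities, the convexity/exactness dichotomy for the drift term, the splitting $d_tu_h^{n+1}=d_tu_h^n+(d_tu_h^{n+1}-d_tu_h^n)$ with the martingale-increment property killing the first piece and Young's inequality absorbing the second into the dissipation (producing the $\frac14$ coefficient), followed by Gronwall. Your treatment is in fact slightly more complete than the paper's, since you state explicitly the coercivity bound $\E[\|u_h^n\|_{L^2}^2]\le C(1+\E[\tilde{\mathcal H}(u_h^n)])$ needed to close the Gronwall argument, a step the paper leaves implicit.
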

\begin{proof}
Taking the test function $w_h=d_tu^{n+1}_h$ in \eqref{dfem}, we have
\begin{align}\label{eq20180815_1}
&\left(\frac{u^{n+1}_h-2u^{n}_h+u^{n-1}_h}{\tau}, d_tu^{n+1}_h\right) 
+ \tau\left(\nabla u^{n+1}_h, \nabla d_tu^{n+1}_h\right)\\
&\hskip2cm
= \tau\left(f^{n+1}_h, d_tu^{n+1}_h\right)+ \left(g(u^n_h), d_tu^{n+1}_h\right) \,
\bar{\Delta} W_{n+1} \notag.
\end{align}
The two terms on the left can be rewritten as:
\begin{align}\label{eq20180816_1}
&\left(\frac{u^{n+1}_h-2u^{n}_h+u^{n-1}_h}{\tau}, d_tu^{n+1}_h\right)
=\left(d_tu_h^{n+1}- d_tu_h^n,d_tu_h^{n+1}\right)\\
&\hskip1.5cm
=\frac12\|d_tu_h^{n+1}\|_{L^2}^2 - \frac12\|d_tu_h^n\|_{L^2}^2
+\frac12 \|d_tu_h^{n+1}-d_tu_h^n\|_{L^2}^2,\notag\\ 
&\tau\left(\nabla u^{n+1}_h, \nabla d_tu^{n+1}_h\right)
=\left(\nabla u^{n+1}_h, \nabla u^{n+1}_h-\nabla u^{n}_h\right) 
\label{eq20180816_2}\\
&\hskip1.5cm
=\frac12\|\nabla u_h^{n+1}\|_{L^2}^2 - \frac12\|\nabla u_h^n\|_{L^2}^2
+\frac12\|\nabla(u_h^{n+1}-u_h^n)\|_{L^2}^2,\notag
\end{align}
Taking the expectation on the last term yields
\begin{align}
\E\left[\left(g(u^n_h), d_tu^{n+1}_h\right) \, \bar{\Delta}W_{n+1}\right]
&=\E\left[\left( g(u_h^n), d_tu_h^{n+1}-d_tu_h^n\right) \,\bar{\Delta} W_{n+1}\right]\label{eq20180816_3}\\
\le C\tau\E&\left[1+\|u_h^n\|_{L^2}^2\right] + \frac14\E\left[\|d_tu_h^{n+1}-d_tu_h^n\|_{L^2}^2\right]\notag,
\end{align}
where the first equality comes from the fact that 
$$ 
  \E\left[\left(g(u^n_h), d_tu^{n}_h\right) \,
  \bar{\Delta}W_{n+1}\right]=\E\left(g(u^n_h),
  d_tu^{n}_h\right)\,\E\left[\bar{\Delta}W_{n+1}\right]=0,
$$
and the second inequality is a result of the Cauchy-Schwarz
inequality, the growth condition of $g(u)$ in \eqref{gu_grow} and the
property of $\bar{\Delta}W_{n+1}$ in \eqref{deltaW}.

The bound of the first term on the right-hand side is discussed case
by case:
\begin{enumerate}
\item For the fully implicit discretization \eqref{eq:fully-implicit},
use Taylor's formula to derive
\begin{align*}
F(u_h^n) = F(u_h^{n+1}) + f(u_h^{n+1})(u_h^{n+1}-u_h^n) +
\frac12F''(\xi)(u_h^{n+1}-u_h^n)^2,
\end{align*}
where $\xi$ locates between $u_h^n$ and $u_h^{n+1}$. Notice that
$\frac12F''(\xi)(u_h^{n+1}-u_h^n)^2\ge0$ under the Assumption \ref{assumption}.

\item For the modified Crank-Nicolson discretization \eqref{eq:CN}, 
\begin{align*}
F(u_h^n) = F(u_h^{n+1}) + \hat{f}(u_h^{n+1}, u_h^n)(u_h^{n+1}-u_h^n).
\end{align*}
\end{enumerate}
Therefore, one can conclude that, for both fully implicit
\eqref{eq:fully-implicit} and modified Crank-Nicolson \eqref{eq:CN}
discretizations, 
\begin{align}\label{eq20181008_1}
\tau(-f^{n+1}_h, d_tu^{n+1}_h ) \ge (F(u_h^{n+1})-F(u_h^n), 1).
\end{align}
Summing the equation \eqref{eq20180815_1} over $n$ from $1$ to
$\ell-1$, taking the expectation on both sides and using the results
\eqref{eq20180816_1}-\eqref{eq20181008_1}, we have
\begin{align}\label{eq20180815_2}
&\E\left[\tilde{\mathcal
  H}(u_h^\ell)\right]+\frac14\sum_{n=1}^{\ell-1}\E\left[\|d_tu_h^{n+1}-d_tu_h^n\|_{L^2}^2\right]+\frac12\sum_{n=1}^{\ell-1}\E\left[\|\nabla(u_h^{n+1}-u_h^n)\|_{L^2}^2\right]\\
&\le \E\left[\tilde{\mathcal H}(u_h^1)\right]+
  C\tau\sum_{n=1}^{\ell-1}\E\left[\|u_h^n\|_{L^2}^2\right]+C.\notag
\end{align}
Applying the Gronwall's inequality yields
\begin{align}\label{eq20180818_1}
\E\left[\tilde{\mathcal H}(u_h^\ell)\right]
+\frac14\sum_{n=0}^{\ell-1}\E\left[\|d_tu_h^{n+1}-d_tu_h^n\|_{L^2}^2\right]
+\frac12\sum_{n=0}^{\ell-1}\E\left[\|\nabla(u_h^{n+1}-u_h^n)\|_{L^2}^2\right] \le C\notag.
\end{align}
This gives the desired stability in $L^2$ norm and energy norms.
\end{proof}


\subsection{Stability of the higher order moments}
The following stability of the higher order moments can be established based on the stability results in Theorem \ref{thm20180815_1}.
\begin{theorem}\label{high_moment1} 
Let $\{u_h^\ell\}_{\ell=0}^{N}$ denote the numerical solutions of the
finite element methods \eqref{dfem}.  Under the Assumption
\ref{assumption}, for any integer $p\ge 2$, it holds for any integer
$\ell\in[1,N]$ that
\begin{equation}\label{high_moment2}
\E\left[\|\nabla u_h^{\ell}\|_{L^2}^p + \|d_t u_h^{\ell}\|_{L^2}^p +
  (F(u_h^\ell),1)^p \right] \le C.
\end{equation}
\end{theorem}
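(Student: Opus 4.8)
The plan is to reduce everything to a moment bound on the discrete Hamiltonian $\tilde{\mathcal H}(u_h^n)$ from \eqref{eq:Hamiltonian-h}. Since $\alpha\ge0$, $\lambda>0$ and $q$ is odd, condition \eqref{eq20191102_2} gives the pointwise coercivity $F(u)\ge\frac{\alpha}{2}u^2+\frac{\lambda}{2}|u|^{q+1}\ge0$, so all three summands in $\tilde{\mathcal H}$ are nonnegative and each is dominated by $\tilde{\mathcal H}(u_h^n)$ itself. In particular $\|\nabla u_h^\ell\|_{L^2}^p+\|d_tu_h^\ell\|_{L^2}^p\le (2\tilde{\mathcal H}(u_h^\ell))^{p/2}$ and $(F(u_h^\ell),1)^p\le\tilde{\mathcal H}(u_h^\ell)^p$, whence the left side of \eqref{high_moment2} is bounded by $C(1+\tilde{\mathcal H}(u_h^\ell)^p)$. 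Thus it suffices to prove $\E[\tilde{\mathcal H}(u_h^\ell)^p]\le C$, and I would work with $X_n:=1+\tilde{\mathcal H}(u_h^n)\ge1$. The same coercivity, together with H\"older's inequality on the bounded domain $\D$, also yields $\|u_h^n\|_{L^2}^2\le C(1+\|u_h^n\|_{L^{q+1}}^{q+1})\le C\,X_n$, a bound needed to control the noise.

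First I would isolate a one-step energy inequality from the computation already carried out for Theorem \ref{thm20180815_1}. Keeping the dissipation terms and the martingale increment unaveraged in \eqref{eq20180816_1}--\eqref{eq20181008_1}, testing \eqref{dfem} with $w_h=d_tu_h^{n+1}$ gives
\begin{align*}
\tilde{\mathcal H}(u_h^{n+1})+\tfrac12\|d_tu_h^{n+1}-d_tu_h^n\|_{L^2}^2+\tfrac12\|\nabla(u_h^{n+1}-u_h^n)\|_{L^2}^2\le \tilde{\mathcal H}(u_h^n)+\big(g(u_h^n),d_tu_h^{n+1}\big)\bar{\Delta}W_{n+1}.
\end{align*}
I would split the stochastic term as $\big(g(u_h^n),d_tu_h^n\big)\bar{\Delta}W_{n+1}+\big(g(u_h^n),d_tu_h^{n+1}-d_tu_h^n\big)\bar{\Delta}W_{n+1}=:M_{n+1}+M_{n+1}'$. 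The correction $M_{n+1}'$ is absorbed: by Young's inequality $|M_{n+1}'|\le\frac14\|d_tu_h^{n+1}-d_tu_h^n\|_{L^2}^2+\|g(u_h^n)\|_{L^2}^2|\bar{\Delta}W_{n+1}|^2$, and the first summand is swallowed by the dissipation on the left. Using \eqref{gu_grow} and $\|u_h^n\|_{L^2}^2\le C X_n$ this leaves the recursion
\begin{align*}
X_{n+1}\le X_n+M_{n+1}+C\,X_n\,|\bar{\Delta}W_{n+1}|^2,\qquad \E[M_{n+1}\mid\mathcal{F}_{t_n}]=0,
\end{align*}
where $|(g(u_h^n),d_tu_h^n)|\le C X_n$, so $M_{n+1}$ is a genuine $\mathcal{F}_{t_n}$-martingale increment with $\E[M_{n+1}^2\mid\mathcal{F}_{t_n}]\le C X_n^2\tau$ and $\E[|M_{n+1}|^p\mid\mathcal{F}_{t_n}]\le C X_n^p\tau^{p/2}$.

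The core of the argument is to raise this recursion to the $p$-th power while preserving the mean-zero structure of $M_{n+1}$. Writing the right side as $X_n+R_{n+1}$ with $R_{n+1}:=M_{n+1}+CX_n|\bar{\Delta}W_{n+1}|^2$ and noting $X_n+R_{n+1}\ge X_{n+1}\ge0$, I would apply the elementary inequality $(a+b)^p\le a^p+p\,a^{p-1}b+C_p\big(a^{p-2}b^2+|b|^p\big)$ (valid for $a\ge0$, $a+b\ge0$, $p\ge2$) with $a=X_n$, $b=R_{n+1}$, and then take $\E[\cdot\mid\mathcal{F}_{t_n}]$. The linear term contributes $p\,X_n^{p-1}\E[R_{n+1}\mid\mathcal{F}_{t_n}]=p\,X_n^{p-1}\cdot CX_n\tau=C\tau X_n^p$, because $\E[M_{n+1}\mid\mathcal{F}_{t_n}]=0$ kills the would-be $O(\tau^{1/2})$ contribution. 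Using the Gaussian moments $\E|\bar{\Delta}W_{n+1}|^k=C\tau^{k/2}$ with the conditional moment bounds on $M_{n+1}$, the remainder terms satisfy $\E[R_{n+1}^2\mid\mathcal{F}_{t_n}]\le C X_n^2\tau$ and $\E[|R_{n+1}|^p\mid\mathcal{F}_{t_n}]\le C X_n^p\tau^{p/2}\le C X_n^p\tau$ for $\tau\le1$, so after multiplication by $X_n^{p-2}$ every term past $X_n^p$ is $O(\tau X_n^p)$. Collecting these gives $\E[X_{n+1}^p\mid\mathcal{F}_{t_n}]\le(1+C\tau)X_n^p$, hence $\E[X_{n+1}^p]\le(1+C\tau)\E[X_n^p]$. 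Iterating from $n=0$ and using $(1+C\tau)^\ell\le e^{C\tau\ell}\le e^{CT}$ yields $\E[X_\ell^p]\le e^{CT}\E[X_0^p]$, and since $u_h^0=P_hh_1$ and $d_tu_h^0=P_hh_2$ are deterministic, $\E[X_0^p]=X_0^p<\infty$. This proves $\E[\tilde{\mathcal H}(u_h^\ell)^p]\le C$ and therefore \eqref{high_moment2}.

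The main obstacle, and the only delicate point, is the stochastic bookkeeping in this last step: a naive bound $|M_{n+1}|\le CX_n|\bar{\Delta}W_{n+1}|$ would feed an $\E|\bar{\Delta}W_{n+1}|\sim\tau^{1/2}$ term into the recursion, producing a factor $(1+C\tau^{1/2})^{\ell}$ that blows up as $\tau\to0$. Retaining the martingale increment under the conditional expectation, so that its first-order contribution vanishes and only its second moment (of order $\tau$) survives, is exactly what produces the correct $(1+C\tau)$ growth and makes the discrete Gr\"onwall argument close.
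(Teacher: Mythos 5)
Your proof is correct, but it takes a genuinely different route from the paper's. Both arguments start from the same pathwise one-step energy inequality obtained by testing \eqref{dfem} with $d_tu_h^{n+1}$ (the paper's \eqref{eq20181231_1}), both split the noise into the martingale increment $(g(u_h^n),d_tu_h^n)\bar{\Delta}W_{n+1}$ plus a correction absorbed by the dissipation, and both exploit the vanishing mean of that increment. From there the paper proceeds by a dyadic bootstrap: it multiplies the one-step inequality by $\tilde{\mathcal H}(u_h^{n+1})+\frac12\tilde{\mathcal H}(u_h^n)$ to produce a telescoping difference of squares \eqref{eq20190909_2}, sums and applies Gronwall to get the second moment \eqref{eq20190909_7}, then repeats the multiplication (by $\tilde{\mathcal H}(u_h^{n+1})^2+\frac12\tilde{\mathcal H}(u_h^n)^2$, and so on) to reach $p=2^m$ for every $m$, and finally interpolates to general $p$ with Young's inequality. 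You instead raise the pathwise recursion $X_{n+1}\le X_n+R_{n+1}$ directly to the $p$-th power via the binomial-type inequality $(a+b)^p\le a^p+pa^{p-1}b+C_p\left(a^{p-2}b^2+|b|^p\right)$, and use conditional expectation so that the first-order martingale contribution vanishes, yielding the one-step contraction $\E\left[X_{n+1}^p\mid\F_{t_n}\right]\le(1+C\tau)X_n^p$ for every $p\ge2$ in a single pass, with no induction over moments and no final interpolation. What each buys: the paper's route uses only unconditional expectations and elementary algebra, at the price of the $2^m$-induction; yours is shorter, uniform in $p$ (it works for any real $p\ge 2$), and makes the key cancellation --- first moment of the noise is zero, so only the $O(\tau)$ second moment survives --- completely explicit, at the price of more delicate conditional-moment bookkeeping. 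A further small merit of your write-up is that you justify $\|u_h^n\|_{L^2}^2\le CX_n$ from the coercivity \eqref{eq20191102_2} together with H\"older's inequality (which is needed since $\alpha$ may be zero), a point the paper glosses over when it asserts $\tilde{\mathcal H}(u_h^n)\ge\frac12\|u_h^n\|_{L^2}^2$.
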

\begin{proof}
To ease the presentation, the proof is divided into three steps.

{\bf Step 1.} Following the results
  \eqref{eq20180815_1}--\eqref{eq20181008_1} in the proof of Theorem
  \ref{thm20180815_1}, we have
\begin{align}\label{eq20181231_1}
&\frac{1}{2}\|d_t u_h^{n+1}\|_{L^2}^2 - \frac{1}{2}\|d_t
u_h^n\|_{L^2}^2 + \frac{1}{2}\|d_tu_h^{n+1} - d_t u_h^n\|_{L^2}^2 \\
& \hskip1cm
+\frac{1}{2}\|\nabla u_h^{n+1}\|_{L^2}^2 - \frac{1}{2}\|\nabla
u_h^n\|_{L^2}^2 + \frac{1}{2}\|\nabla u_h^{n+1} - \nabla
u_h^n\|_{L^2}^2 \notag \\
& \hskip1cm
+ (F(u_h^{n+1}) - F(u_h^n), 1) \leq (g(u_h^n), d_t
u_h^{n+1})\bar{\Delta}W_{n+1}, \notag
\end{align}
which can be recast as follows thanks to the definition
\eqref{eq:Hamiltonian-h},
\begin{align} \label{eq20190909_1}
&\tilde{\mathcal H}(u_h^{n+1}) - \tilde{\mathcal H}(u_h^n) + \frac{1}{2}\|d_tu_h^{n+1} - d_t
u_h^n\|_{L^2}^2  + \frac{1}{2}\|\nabla u_h^{n+1} - \nabla 
u_h^n\|_{L^2}^2 \\ 
& \hskip3cm
\leq \left(g(u_h^n), d_t u_h^{n+1}\right)\bar{\Delta}W_{n+1}. \notag
\end{align}
Utilizing the following identity 
$$ 
\tilde{\mathcal H}(u_h^{n+1}) + \frac{1}{2} \tilde{\mathcal H}(u_h^{n})= 
\frac{3}{4}\left(\tilde{\mathcal H}(u_h^{n+1}) + \tilde{\mathcal H}(u_h^{n})\right)  + 
\frac{1}{4}\left(\tilde{\mathcal H}(u_h^{n+1}) - \tilde{\mathcal H}(u_h^{n})\right),
$$ 
and multiplying \eqref{eq20190909_1} by the term $\tilde{\mathcal H}(u_h^{n+1}) +
\frac{1}{2} \tilde{\mathcal H}(u_h^{n})$, we obtain
\begin{align} \label{eq20190909_2}
& \frac34\left( \tilde{\mathcal H}(u_h^{n+1})^2 - \tilde{\mathcal  H}(u_h^{n})^2 \right) 
+\frac14\left( \tilde{\mathcal H}(u_h^{n+1}) - \tilde{\mathcal H}(u_h^{n}) \right)^2\\
& \hskip0.5cm
+\frac{1}{2}\left(\|d_tu_h^{n+1} - d_t u_h^n\|_{L^2}^2  + \|\nabla u_h^{n+1} - \nabla u_h^n\|_{L^2}^2\right)
\left( \tilde{\mathcal H}(u_h^{n+1}) + \frac{1}{2} \tilde{\mathcal H}(u_h^{n})\right) \notag\\
& \hskip0.5cm
 \leq \left(g(u_h^n), d_t u_h^{n+1}\right)\bar{\Delta}W_{n+1}
\left( \tilde{\mathcal H}(u_h^{n+1}) + \frac{1}{2} \tilde{\mathcal H}(u_h^{n})\right).	\notag
\end{align}
The right-hand side of \eqref{eq20190909_2} can be rewritten as 
\begin{align} \label{eq20190909_3}
&
 \left(g(u_h^n), d_t u_h^{n+1}\right)\bar{\Delta}W_{n+1} 
 \left(\tilde{\mathcal H}(u_h^{n+1}) + \frac{1}{2} \tilde{\mathcal H}(u_h^{n})\right) \\
& \hskip0.5cm
 =  \left(g(u_h^n), d_t u_h^{n+1} - d_t u_h^n\right)\bar{\Delta}W_{n+1}  
\left(\tilde{\mathcal H}(u_h^{n+1}) + \frac{1}{2} \tilde{\mathcal H}(u_h^{n})\right)
\notag\\
& \hskip0.5cm \qquad 
+ \left(g(u_h^n), d_t u_h^n\right)\bar{\Delta}W_{n+1} 
\left(\tilde{\mathcal H}(u_h^{n+1}) + \frac{1}{2} \tilde{\mathcal H}(u_h^{n})\right) \notag \\
& \hskip0.5cm 
\leq \bigg( \frac{1}{4}\|d_t u_h^{n+1} - d_t u_h^n\|_{L^2}^2 +
C\left(\|u_h^n\|_{L^2}^2+1\right)(\bar{\Delta}W_{n+1})^2 \notag \\
& \hskip0.5cm \qquad 
+  \left(g(u_h^n), d_t u_h^n\right)\bar{\Delta}W_{n+1}\bigg)
\left( \tilde{\mathcal H}(u_h^{n+1}) + \frac{1}{2} \tilde{\mathcal H}(u_h^{n})\right),  \notag
\end{align}
by applying the Cauchy-Schwarz inequality and the growth condition of $g(u)$ in \eqref{gu_grow}.
The last two terms can be bounded as
\begin{align} \label{eq20190909_4}
&
\bigg(C(\|u_h^n\|_{L^2}^2+1)(\bar{\Delta} W_{n+1})^2+  \left(g(u_h^n), d_t u_h^n\right)\bar{\Delta}W_{n+1}\bigg)\\
&\times\left( \tilde{\mathcal H}(u_h^{n+1}) + \frac{1}{2} \tilde{\mathcal H}(u_h^{n})\right) \notag\\
& 
\leq \frac18 \left(\tilde{\mathcal H}(u_h^{n+1}) - \tilde{\mathcal H}(u_h^n)\right)^2 
+C(\|u_h^n\|_{L^2}^4+1)(\bar{\Delta}W_{n+1})^4\notag\\
&+C\|d_tu_h^n\|_{L^2}^2 (\|u_h^n\|_{L^2}^2+1) (\bar{\Delta}W_{n+1})^2+C\tilde{\mathcal H}(u_h^n)(\|u_h^n\|_{L^2}^2+1) (\bar{\Delta}W_{n+1})^2	\notag \\
& \hskip5mm
+ \frac{3}{2}\tilde{\mathcal H}(u_h^n)(g(u_h^n), d_t u_h^n)\bar{\Delta}W_{n+1}.		\notag
\end{align}
Combining the equation \eqref{eq20190909_2} with the results in \eqref{eq20190909_3}-\eqref{eq20190909_4} yields
\begin{align}\label{eq20200306_1}
& \frac34\left( \tilde{\mathcal H}(u_h^{n+1})^2 - \tilde{\mathcal H}(u_h^{n})^2 \right)
+\frac18\left( \tilde{\mathcal H}(u_h^{n+1}) - \tilde{\mathcal H}(u_h^{n}) \right)^2\\
& \hskip0.1cm \quad
+\left(\frac{1}{4}\|d_tu_h^{n+1} - d_t u_h^n\|_{L^2}^2  +
\frac{1}{2}\|\nabla u_h^{n+1} - \nabla u_h^n\|_{L^2}^2\right)
\left( \tilde{\mathcal H}(u_h^{n+1}) + \frac{1}{2} \tilde{\mathcal H}(u_h^{n})\right)
\notag \\
& \hskip0.1cm  
\leq C(\|u_h^n\|_{L^2}^4+1)(\bar{\Delta}W_{n+1})^4 
+C\|d_tu_h^n\|_{L^2}^2 (\|u_h^n\|_{L^2}^2+1) (\bar{\Delta}W_{n+1})^2 	\notag \\
& \hskip0.1cm \quad 
+C\tilde{\mathcal H}(u_h^n)(\|u_h^n\|_{L^2}^2+1) (\bar{\Delta}W_{n+1})^2
+ \frac{3}{2}\tilde{\mathcal H}(u_h^n)(g(u_h^n), d_t u_h^n)\bar{\Delta}W_{n+1}.	 \notag
\end{align}
Summing the equation \eqref{eq20200306_1} over $n$ from $1$ to $\ell-1$ and taking expectation on both sides, we obtain 
\begin{align} \label{eq20190909_6}
& \frac{3}{4}\E\left[\tilde{\mathcal H}(u_h^\ell)^2\right] +
\frac{1}{8}\sum_{n=1}^{\ell-1} \E \left[ 
[\tilde{\mathcal H}(u_h^{n+1}) - \tilde{\mathcal H}(u_h^n)]^2 \right] + \sum_{n=1}^{\ell-1}
\E \bigg[\bigg(\frac{1}{4}\|d_tu_h^{n+1} \\ 
&  \quad - d_t u_h^n\|_{L^2}^2  +
\frac{1}{2}\|\nabla u_h^{n+1} - \nabla u_h^n\|_{L^2}^2\bigg)	\left( \tilde{\mathcal H}(u_h^{n+1}) + \frac{1}{2} \tilde{\mathcal H}(u_h^{n})\right)\bigg]		\notag\\
& \leq \frac{3}{4}\E\left[\tilde{\mathcal H}(u_h^0)^2\right] 
+ C\tau^2 \sum_{n=1}^{\ell-1}\E\left[ (\|u_h^n\|_{L^2}^4+1) \right] 	\notag  \\
& \quad 
+ C\tau \sum_{n=1}^{\ell-1}\E\left[ \|d_tu_h^n\|_{L^2}^2 (\|u_h^n\|_{L^2}^2+1) \right]+ C\tau \sum_{n=1}^{\ell-1}\E\left[\tilde{\mathcal H}(u_h^n)(\|u_h^n\|_{L^2}^2+1)\right].\notag 
\end{align}
Following the definition of $\tilde{\mathcal H}(u_h^n)$ in \eqref{eq:Hamiltonian-h}, one has 
 $\tilde{\mathcal H}(u_h^n) \geq \frac{1}{2} \max\left(\|u_h^n\|_{L^2}^2, \|d_tu_h^n\|_{L^2}^2\right)$, which implies that 
\begin{align*}
& \|u_h^n\|_{L^2}^4 + 1 \leq C \tilde{\mathcal H}(u_h^n)^2 + 1, \quad  \\
&\|d_tu_h^n\|_{L^2}^2 (\|u_h^n\|_{L^2}^2 +1) \leq C \|d_tu_h^n\|_{L^2}^4 + \left(\|u_h^n\|_{L^2}^4 +1\right) \leq C \tilde{\mathcal H}(u_h^n)^2 + 1, \quad  \\
& \tilde{\mathcal H}(u_h^n) (\|u_h^n\|_{L^2}^2+1) \leq C \tilde{\mathcal H}(u_h^n)^2 + \left(\|u_h^n\|_{L^2}^4 +1\right) \leq C \tilde{\mathcal H}(u_h^n)^2 + 1.
\end{align*}
Therefore, the following result can be obtained by applying Gronwall's inequality: 
\begin{align} \label{eq20190909_7}
& \E\left[\tilde{\mathcal H}(u_h^\ell)^2\right] 
+\sum_{n=1}^{\ell-1} \E \left[ 
\left(\tilde{\mathcal H}(u_h^{n+1}) - \tilde{\mathcal H}(u_h^n)\right)^2 \right] \\ 
& + \sum_{n=0}^{\ell-1}
\E \left[  \left(\|d_tu_h^{n+1} - d_t u_h^n\|_{L^2}^2  + \|\nabla u_h^{n+1} - \nabla u_h^n\|_{L^2}^2 \right)
\left( \tilde{\mathcal H}(u_h^{n+1}) + \tilde{\mathcal H}(u_h^{n}) \right)
\right]  \notag\\
& \leq C, 	\notag 
\end{align}
which gives us the second order moment stability (i.e., \eqref{high_moment2} when $p=2$).

\smallskip
{\bf Step 2.} Next, the higher order moment stability \eqref{high_moment2} can be established for $p=2^m$ with any positive integer $m$. 
Similar to the derivation in Step 1, we start from the equation \eqref{eq20200306_1} and multiply it by
$\tilde{\mathcal H}(u_h^{n+1})^2+\frac12\tilde{\mathcal H}(u_h^n)^2$. After some simple algebra, 
we can obtain the fourth moment stability of the numerical
solution $u_h^\ell$ below
\begin{align} \label{eq20191025_1}
& \E\left[\tilde{\mathcal H}(u_h^\ell)^4\right] 
+\sum_{n=1}^{\ell-1} \E \left[ 
\left(\bigl(\tilde{\mathcal H}(u_h^{n+1})\bigr)^2 - \bigl(\tilde{\mathcal H}(u_h^n)\bigr)^2\right)^2 \right] 
 \leq C. 
\end{align}

Applying this process repeatedly, the $2^m$-th moment stability of the numerical solution $u_h^\ell$ can be obtained for any positive integer $m$.

\smallskip
{\bf Step 3.} For arbitrary positive integer $p$, suppose $2^{m-1}\le
p\le 2^m$ for some $m$, and one can apply the Young's inequality to
obtain
\begin{align}\label{eq20181231_12}
\E\left[\tilde{\mathcal H}(u_h^\ell)^p\right]&\le \E\left[ 
\tilde{\mathcal H}(u_h^\ell)^{2^m}\right]+C \leq C,
\end{align}
where the second inequality follows from the results in {\bf Step 2}.
The proof is done.
\end{proof}

\section{Error estimates}\label{sec-4}
In this section, we present the error estimates of the proposed finite
element methods. The stability estimates studied in the previous
section are crucial in the analysis. 
\subsection{Error equations in mixed form}
Denoting $v = u_t$, we can rewrite the SPDE \eqref{sac_s} as 
\begin{equation} \label{sac_s_mixed}
  \left\{
  \begin{aligned}
    du &= v dt, \\
    dv &= \Delta u dt + f(u)dt + g(u)dW(t).
  \end{aligned}
  \right.
\end{equation}
Define the numerical errors by $e_u^n := u(t_n)-u_h^n :=
\eta_u^n + \xi_u^n$, $e_v^n := v(t_n) - v_h^n :=
\eta_v^n + \xi_v^n$, where
\begin{align*}
\eta_u^n := u(t_n) - P_h u(t_n) \quad \text{and} \quad \xi_u^n := P_h
  u(t_n) - u_h^n,  \quad n = 0,1,2,...,N, \\
\eta_v^n := v(t_n) - P_h v(t_n) \quad \text{and} \quad \xi_v^n := P_h
  v(t_n) - v_h^n,  \quad n = 0,1,2,...,N, 
\end{align*}
represent the errors of the $L^2$-projection and the errors between the
numerical solution and projected strong solution, respectively.

It follows from \eqref{sac_s_mixed} that for all $t_n$, there holds
$\mathbb{P}$-almost surely 
\begin{align}
  & (u(t_{n+1}) - u(t_n), w_h) = \int_{t_n}^{t_{n+1}} (v(s), w_h) ds
  \qquad\qquad\qquad\quad \forall w_h \in V_h,  \label{eq20210627_11} \\
  & (v(t_{n+1}) - v(t_n), z_h) + \int_{t_n}^{t_{n+1}} (\nabla u(s),
  \nabla z_h) ds   \label{eq20210627_12}\\ 
  & \qquad = \int_{t_n}^{t_{n+1}}(f(u(s)), z_h) ds +
  \int_{t_n}^{t_{n+1}} (g(u(s)), z_h) dW(s) \quad \forall z_h\in V_h. \notag
\end{align}
Also, the numerical scheme \eqref{dfem} can be rewritten in the equivalent mixed form as
\begin{align}
&(u^{n+1}_h-u^n_h,w_h) = \tau (v^{n+1}_h,w_h) \qquad \forall w_h \in V_h,\label{eq20210623_1}\\
&\left(v^{n+1}_h - v^n_h, z_h\right) 
+  \tau(\nabla u^{n+1}_h, \nabla z_h) \label{eq20210623_2}\\ 
&\qquad\qquad\qquad
 = \tau(f^{n+1}_h, z_h)+ (g(u^n_h), z_h) \, \bar{\Delta} W_{n+1} \qquad \forall \, z_h \in V_h.\notag
\end{align}
From the properties of the $L^2$-projection, we have $(\eta_u^n, w_h)
= 0$ and $(\eta_v^n, z_h) = 0$. Combining with the equations \eqref{eq20210627_11}-\eqref{eq20210623_2} leads to 
\begin{align} 
  & (\xi_u^{n+1} - \xi_u^n, w_h) = \int_{t_n}^{t_{n+1}} (v(s) -
  v_h^{n+1}, w_h) ds \qquad \forall w_h \in V_h,
  \label{eq:err_u} \\
  &  (\xi_v^{n+1} - \xi_v^{n}, z_h) + \int_{t_n}^{t_{n+1}}(\nabla u(s)
  - \nabla u_h^{n+1}, \nabla z_h) ds \label{eq:err_w}\\
 & \qquad = \int_{t_n}^{t_{n+1}}  ( f(u(s)) - f_h^{n+1}, z_h) ds
  \notag \\ 
  & \qquad\quad + \int_{t_n}^{t_{n+1}} (g(u(s)) - g(u_h^n), z_h) dW(s)
  \qquad\quad \forall
  z_h \in V_h.\notag
\end{align}

\subsection{Error estimates}

To handle the nonlinearity, we define a sequence of sets as
\begin{align} \label{eq:subset-omega}
\tilde{\Omega}_{\kappa,m} = \Bigl\{\omega\in\Omega:
  \max\limits_{1\leq n \leq m}\|u_h^{n}\|_{H^1}^2+\max\limits_{s\leq t_m}\|u(s)\|_{H^1}^2\le \kappa\Bigr\}.
\end{align}
Here $u(t_n)$ is the strong solution of \eqref{sac_s}-\eqref{sac_s2}, $u_h^{n}$ is the numerical solution of \eqref{dfem}, and $\kappa > 0$ will be specified. Clearly, it holds that $\tilde{\Omega}_{\kappa, 0} \supset \tilde{\Omega}_{\kappa,1} \supset \cdots \supset \tilde{\Omega}_{\kappa,\ell}$. 
The following lemma about the nonlinear term is needed in the proof of the error estimates.
\begin{lemma}\label{lem_20201226_5}
Let $u$ be the strong solution to problem \eqref{sac_s}--\eqref{sac_s2}. Under the assumptions \eqref{eq20210627_1}-\eqref{gu_grow}, for any $s,t \in [0,T]$ with $s < t\le t_m$, we have
\begin{align*}
\E \big[\mathds{1}_{\tilde{\Omega}_{\kappa, m}} \| f(u(t))-f(u(s)) \|_{L^2}^2 \big]\le C\kappa^{q-1}(t-s)^2.
\end{align*}
where
\begin{align*}
C =&\, C\E \big[ \| u_t(0)\|_{H^1}^2\big]+C\E \big[ \int_0^t\| \Delta u(\zeta)\|_{H^1}^2d\zeta\big]+C\E \big[ \int_0^t \|u(\zeta)\|_{L^{2q}}^{2q}d\zeta\big]\\
& + C\E \big[ \int_0^t \|u(\zeta)\|_{L^{4(q-1)}}^{4(q-1)}d\zeta\big]+C\E \big[\int_0^t\|\nabla u(\zeta)\|_{L^4}^4d\zeta\big]+C.
\end{align*}
\end{lemma}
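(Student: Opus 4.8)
The plan is to reduce the estimate to the Hölder continuity results for $u$ in the $H^1$ norm already proved in Lemmas \ref{lem_20201226_1} and \ref{lem_20200630_1}, using the indicator $\mathds{1}_{\tilde{\Omega}_{\kappa,m}}$ to tame the superlinear growth of $f$. First I would exploit the explicit polynomial structure \eqref{eq20191102_1}: since $f(u)=\sum_{j=1}^q a_j(x)u^j$ with each $a_j$ bounded, the triangle inequality gives
\begin{align*}
\|f(u(t))-f(u(s))\|_{L^2}^2 \le C\sum_{j=1}^q \|u(t)^j-u(s)^j\|_{L^2}^2.
\end{align*}
I would then apply Lemma \ref{lem20190928_1} term by term with the choices $u=u(t)$ and $v=u(s)$, which yields, for each $j$,
\begin{align*}
\|u(t)^j-u(s)^j\|_{L^2}^2 \le C\big(\|u(t)\|_{H^1}^{2(j-1)}+\|u(s)\|_{H^1}^{2(j-1)}\big)\|u(t)-u(s)\|_{H^1}^2.
\end{align*}

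This is exactly where the event $\tilde{\Omega}_{\kappa,m}$ is used. For $s,t\le t_m$, on $\tilde{\Omega}_{\kappa,m}$ both $\|u(t)\|_{H^1}^2$ and $\|u(s)\|_{H^1}^2$ are bounded by $\kappa$, so (taking $\kappa\ge1$) the prefactor satisfies $\|u(t)\|_{H^1}^{2(j-1)}+\|u(s)\|_{H^1}^{2(j-1)}\le C\kappa^{j-1}\le C\kappa^{q-1}$ for every $1\le j\le q$. Multiplying the term-by-term bound by $\mathds{1}_{\tilde{\Omega}_{\kappa,m}}$, summing over $j$, and then discarding the remaining indicator via $\mathds{1}_{\tilde{\Omega}_{\kappa,m}}\le1$ on the factor $\|u(t)-u(s)\|_{H^1}^2$, I would obtain the pointwise-in-$\omega$ estimate
\begin{align*}
\mathds{1}_{\tilde{\Omega}_{\kappa,m}}\|f(u(t))-f(u(s))\|_{L^2}^2 \le C\kappa^{q-1}\|u(t)-u(s)\|_{H^1}^2.
\end{align*}

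Finally, I would take expectations and split the $H^1$ norm into its two parts,
\begin{align*}
\E\big[\|u(t)-u(s)\|_{H^1}^2\big]=\E\big[\|u(t)-u(s)\|_{L^2}^2\big]+\E\big[\|\nabla(u(t)-u(s))\|_{L^2}^2\big]\le C(t-s)^2,
\end{align*}
where the $L^2$ part is controlled by Lemma \ref{lem_20201226_1} and the gradient part by Lemma \ref{lem_20200630_1}; the constant $C$ is then the sum of the constants furnished by those two lemmas, which matches the displayed expression in the statement. Combining the last two displays gives the claimed bound $C\kappa^{q-1}(t-s)^2$.

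The only delicate point, and the reason the set $\tilde{\Omega}_{\kappa,m}$ is introduced at all, is that $f$ is not globally Lipschitz, so the superlinear factor $\|u(t)\|_{H^1}^{2(q-1)}+\|u(s)\|_{H^1}^{2(q-1)}$ coming out of Lemma \ref{lem20190928_1} cannot be handled in expectation without a uniform $H^1$ bound on the solution. Restricting to $\tilde{\Omega}_{\kappa,m}$ supplies precisely this bound, letting the superlinear factor be absorbed into the explicit constant $\kappa^{q-1}$ and decoupling it from the stochastic increment; after that, the problem collapses to the clean $O((t-s)^2)$ Hölder estimates of Section \ref{sec-2}. I would also emphasize that the constraint $s,t\le t_m$ is essential, since it is what guarantees $\max_{s\le t_m}\|u(s)\|_{H^1}^2\le\kappa$ on the event and hence that \emph{both} endpoints $u(t)$ and $u(s)$ are simultaneously controlled.
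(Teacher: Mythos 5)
Your proposal is correct and follows essentially the same route as the paper's proof: apply Lemma \ref{lem20190928_1} to the polynomial terms of $f$, use the indicator $\mathds{1}_{\tilde{\Omega}_{\kappa,m}}$ to bound the superlinear $H^1$ prefactors by $C\kappa^{q-1}$, and then invoke Lemmas \ref{lem_20201226_1} and \ref{lem_20200630_1} to control $\E\big[\|u(t)-u(s)\|_{H^1}^2\big]$ by $C(t-s)^2$. Your write-up is in fact slightly more explicit than the paper's (spelling out the triangle inequality over the $a_j u^j$ terms and the dropping of the indicator via $\mathds{1}_{\tilde{\Omega}_{\kappa,m}}\le 1$), but the decomposition, the key lemmas, and the role of the event $\tilde{\Omega}_{\kappa,m}$ are identical.
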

\begin{proof}
By Lemma \ref{lem20190928_1}, we obtain
\begin{align}\label{lem_20201227_1}
&\E \big[\mathds{1}_{\tilde{\Omega}_{\kappa, m}}\| f(u(t))-f(u(s)) \|_{L^2}^2 \big]\\
&\le C\E\Bigl[\mathds{1}_{\tilde{\Omega}_{\kappa, m}}\sum_{j=1}^q\left(\|u(t)\|_{H^1}^{2(j-1)}+\|u(s)\|_{H^1}^{2(j-1)}+1\right)\|u(t)-u(s)\|_{H^1}^2\Bigr] \notag\\
&\le C \kappa^{q-1}\E\Bigl[\mathds{1}_{\tilde{\Omega}_{\kappa, m}}\|u(t)-u(s)\|_{H^1}^2\Bigr]\notag.
\end{align}
Utilizing the conclusions in Lemmas \ref{lem_20201226_1} and \ref{lem_20200630_1} yields
\begin{align}\label{lem_20201227_2}
\E \big[ \mathds{1}_{\tilde{\Omega}_{\kappa, m}}\| f(u(t))-f(u(s)) \|_{L^2}^2 \big]\le C\kappa^{q-1}(t-s)^2,
\end{align}
where 
\begin{align*}
C =&\,C\E \big[ \| u_t(0)\|_{H^1}^2\big]+C\E \big[ \int_0^t\| \Delta u(\zeta)\|_{H^1}^2d\zeta\big]+C\E \big[ \int_0^t \|u(\zeta)\|_{L^{2q}}^{2q}d\zeta\big]\\
& + C\E \big[ \int_0^t \|u(\zeta)\|_{L^{4(q-1)}}^{4(q-1)}d\zeta\big]+C\E \big[\int_0^t\|\nabla u(\zeta)\|_{L^4}^4d\zeta\big]+C,
\end{align*}
which finishes the proof of this lemma.
\end{proof}
Next, we state the following main theorem on the error estimate.
\begin{theorem}\label{thm20180815_2}
Let $\{u_h^\ell\}_{\ell=0}^{N}$ denote the numerical solutions of the
finite element methods \eqref{dfem}.  Under the Assumption
\ref{assumption}, the following error estimate holds for any integer
$\ell\in[1,N]$:
\begin{align*}
 &\mathbb{E}\left[\mathds{1}_{\tilde{\Omega}_{\kappa, \ell}}\|e_u^{\ell}\|_{L^2}^2\right]+\mathbb{E}\left[\mathds{1}_{\tilde{\Omega}_{\kappa, \ell}}\|\nabla e_u^{\ell}\|_{L^2}^2 ds\right]+\mathbb{E}\left[\mathds{1}_{\tilde{\Omega}_{\kappa, \ell}}\|e_v^{\ell}\|_{L^2}^2\right]\\ 
 &\le(Ch^2|\ln h| + C\tau+C\tau^2|\ln h|)h^{-\beta},\notag
\end{align*}
where $\beta>0$ can be chosen to be small enough, $\kappa$ satisfies $\kappa^{q-1}=C\ln(h^{-\beta})$, and $\P\left[\tilde{\Omega}_{\kappa,\ell}\right]\rightarrow1\ \text{as}\ h\rightarrow0$.
\end{theorem}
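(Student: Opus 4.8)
The plan is to work with the mixed-form error equations \eqref{eq:err_u}--\eqref{eq:err_w}, choosing test functions that allow the energy norm of $\xi_u$ and $\xi_v$ to be controlled, and then to assemble the pieces restricted to the set $\tilde{\Omega}_{\kappa,\ell}$. Since the projection errors $\eta_u$ and $\eta_v$ are handled by standard $L^2$-projection estimates (giving $O(h^2)$ or $O(h)$ contributions depending on the norm and the regularity encoded in the H\"older-continuity lemmas of Section~\ref{sec-2}), the core of the argument is to bound $\xi_u$ and $\xi_v$. First, I would take $w_h = \xi_u^{n+1}$ (or a discrete-Laplace-weighted version) in \eqref{eq:err_u} and $z_h = \xi_v^{n+1}$ in \eqref{eq:err_w}, combine them so that the term $\int_{t_n}^{t_{n+1}}(\nabla u(s)-\nabla u_h^{n+1},\nabla z_h)\,ds$ pairs with the time-difference of $\xi_u$ to produce a telescoping energy quantity $\tfrac12\|\xi_v^{n+1}\|_{L^2}^2+\tfrac12\|\nabla\xi_u^{n+1}\|_{L^2}^2$, mirroring the stability identity \eqref{eq20180816_1}--\eqref{eq20180816_2} in Theorem~\ref{thm20180815_1}.

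Next I would decompose each right-hand-side term into a time-consistency part (coming from replacing the continuous-in-time integrals by the one-point quadrature at $t_{n+1}$) and a discretization-error part. The time-consistency pieces are controlled using the H\"older continuity results: Lemma~\ref{lem_20201226_1} and Lemma~\ref{lem_20200630_1} give $\E\|u(t)-u(s)\|_{L^2}^2,\,\E\|\nabla(u(t)-u(s))\|_{L^2}^2 \le C(t-s)^2$, Lemma~\ref{lem_20201226_2} and Lemma~\ref{lem_20201226_4} give the $O(t-s)$ bounds for $v$, and Lemma~\ref{lem_20201226_5} supplies the crucial estimate $\E[\mathds{1}_{\tilde{\Omega}_{\kappa,m}}\|f(u(t))-f(u(s))\|_{L^2}^2]\le C\kappa^{q-1}(t-s)^2$ for the nonlinear drift term. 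The stochastic integral term $\int_{t_n}^{t_{n+1}}(g(u(s))-g(u_h^n),z_h)\,dW(s)$ is treated by splitting $g(u(s))-g(u_h^n) = [g(u(s))-g(u(t_n))]+[g(u(t_n))-g(u_h^n)]$, applying the It\^o isometry, the global Lipschitz property \eqref{eq20210627_2}, and the martingale structure so that the expectation of the cross term with $\xi_v^n$ vanishes; the Lipschitz bound converts $g(u(t_n))-g(u_h^n)$ into $C\|e_u^n\|_{L^2}^2$, which feeds the Gronwall mechanism.

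The heart of the difficulty is the nonlinear drift difference $f(u(s))-f_h^{n+1}$, which I would write as $[f(u(s))-f(u(t_{n+1}))]+[f(u(t_{n+1}))-f(u_h^{n+1})]$ (in the modified Crank--Nicolson case one must instead compare $f_h^{n+1}=\hat f(u_h^{n+1},u_h^n)$ with $f(u(t_{n+1}))$ via the mean-value form of $\hat f$). The first bracket is the consistency term handled by Lemma~\ref{lem_20201226_5}. The second bracket is where the indicator $\mathds{1}_{\tilde{\Omega}_{\kappa,\ell}}$ becomes essential: because $f$ is only polynomially (super-linearly) growing, $\|f(u(t_{n+1}))-f(u_h^{n+1})\|_{L^2}^2$ is controlled by Lemma~\ref{lem20190928_1} as $C(\|u(t_{n+1})\|_{H^1}^{2(q-1)}+\|u_h^{n+1}\|_{H^1}^{2(q-1)})\|e_u^{n+1}\|_{H^1}^2$, and on $\tilde{\Omega}_{\kappa,\ell}$ the prefactor is bounded by $C\kappa^{q-1}$. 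Thus the nonlinear contribution becomes $C\kappa^{q-1}\|e_u^{n+1}\|_{H^1}^2$, and the key quantitative choice $\kappa^{q-1}=C\ln(h^{-\beta})$ keeps this factor only logarithmically large; this is precisely the obstacle that the set construction is designed to overcome, since without the truncation the super-linear term cannot be absorbed.

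Finally I would sum over $n$ from $0$ to $\ell-1$, take expectations, insert the indicator $\mathds{1}_{\tilde{\Omega}_{\kappa,\ell}}$ (using the nesting $\tilde{\Omega}_{\kappa,\ell}\subset\tilde{\Omega}_{\kappa,n+1}$ so that the indicator can be pulled inside each summand, at the cost of controlling the discrete-in-time martingale increments), and apply the discrete Gronwall inequality. The accumulated consistency errors produce the $C\tau$ and $C\tau^2$ terms, the projection errors produce the $h^2$ terms, and the factor $|\ln h|$ together with $h^{-\beta}$ arises from the Gronwall constant $e^{C\kappa^{q-1}}=e^{C\ln(h^{-\beta})}=h^{-\beta}$ generated by the logarithmic prefactor above. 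The concluding statement $\P[\tilde{\Omega}_{\kappa,\ell}]\to1$ as $h\to0$ would be justified separately using the second- and higher-moment stability bounds of Theorems~\ref{thm20180815_1} and \ref{high_moment1} together with the continuous stability \eqref{eq:stability-continuous} via a Markov/Chebyshev argument, since these uniform-in-$h$ moment bounds guarantee that the probability of the $H^1$-norms exceeding the slowly growing threshold $\kappa$ is negligible.
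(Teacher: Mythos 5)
Your proposal is correct and follows essentially the same route as the paper's own proof: the same test functions $w_h=\xi_u^{n+1}$, $z_h=\xi_v^{n+1}$ in the mixed-form error equations, the same consistency/discretization splittings of the drift term $f(u(s))-f_h^{n+1}$ and noise term $g(u(s))-g(u_h^n)$ (handled by Lemmas \ref{lem_20201226_1}--\ref{lem_20201226_5}, Lemma \ref{lem20190928_1}, It\^{o} isometry and the martingale property), the same choice $\kappa^{q-1}=C\ln(h^{-\beta})$ feeding Gronwall's inequality to produce the $h^{-\beta}$ factor, and the same Markov-inequality argument combined with the stability results to show $\P[\tilde{\Omega}_{\kappa,\ell}]\to1$. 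The only cosmetic difference is that the paper telescopes the full gradient error $\|\nabla e_u^{n+1}\|_{L^2}^2$ (via the decomposition of $\nabla\xi_v^{n+1}$ in \eqref{eq20210605_6}) rather than $\|\nabla\xi_u^{n+1}\|_{L^2}^2$, which is an implementation detail rather than a different method.
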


\begin{proof} 
By taking $w_h = \xi_u^{n+1}$ in \eqref{eq:err_u} and $z_h = \xi_v^{n+1}$ in \eqref{eq:err_w}, multiplying \eqref{eq:err_u} and \eqref{eq:err_w} by $\mathds{1}_{\tilde{\Omega}_{\kappa, n+1}}$, and then taking the expectation, we obtain 
\begin{align} 
  & \mathbb{E}\left[\mathds{1}_{\tilde{\Omega}_{\kappa, n+1}}(\xi_u^{n+1} - \xi_u^n, \xi_u^{n+1})\right] = \mathbb{E}\left[\mathds{1}_{\tilde{\Omega}_{\kappa, n+1}}\int_{t_n}^{t_{n+1}} (v(s) -
  v_h^{n+1}, \xi_u^{n+1}) ds\right], \label{eq20210605_1}\\
  &  \mathbb{E}\left[\mathds{1}_{\tilde{\Omega}_{\kappa, n+1}}(\xi_v^{n+1} - \xi_v^{n}, \xi_v^{n+1})\right]\label{eq20210605_2}\\
   & \quad+ \mathbb{E}\left[\mathds{1}_{\tilde{\Omega}_{\kappa, n+1}}\int_{t_n}^{t_{n+1}}(\nabla u(s)
  - \nabla u_h^{n+1}, \nabla \xi_v^{n+1}) ds\right]\notag\\
 &  = \mathbb{E}\left[\mathds{1}_{\tilde{\Omega}_{\kappa, n+1}}\int_{t_n}^{t_{n+1}}  ( f(u(s)) - f_h^{n+1}, \xi_v^{n+1}) ds\right]
  \notag \\ 
  & \quad + \mathbb{E}\left[\mathds{1}_{\tilde{\Omega}_{\kappa, n+1}}\int_{t_n}^{t_{n+1}} (g(u(s)) - g(u_h^n), \xi_v^{n+1}) dW(s)\right]. \notag
\end{align}

The left-hand side of \eqref{eq20210605_1} can be bounded by
\begin{align} \label{eq20210605_3}
& \mathbb{E}\left[\mathds{1}_{\tilde{\Omega}_{\kappa, n+1}}(\xi_u^{n+1} - \xi_u^n, \xi_u^{n+1})\right] \\
= &\, \frac12\mathbb{E}\left[\mathds{1}_{\tilde{\Omega}_{\kappa, n+1}}\|\xi_u^{n+1}\|_{L^2}^2\right] - \frac12\mathbb{E}\left[\mathds{1}_{\tilde{\Omega}_{\kappa, n+1}}\|\xi_u^n\|_{L^2}^2\right] + \frac12\mathbb{E}\left[\mathds{1}_{\tilde{\Omega}_{\kappa, n+1}}\|\xi_u^{n+1}-\xi_u^n\|_{L^2}^2\right]\notag\\
= &\,\frac12\mathbb{E}\left[\mathds{1}_{\tilde{\Omega}_{\kappa, n+1}}\|\xi_u^{n+1}\|_{L^2}^2\right] - \frac12\mathbb{E}\left[\mathds{1}_{\tilde{\Omega}_{\kappa, n}}\|\xi_u^n\|_{L^2}^2\right] + \frac12\mathbb{E}\left[\mathds{1}_{\tilde{\Omega}_{\kappa, n+1}}\|\xi_u^{n+1}-\xi_u^n\|_{L^2}^2\right]\notag\\
&\quad+ \frac12\mathbb{E}\left[(\mathds{1}_{\tilde{\Omega}_{\kappa, n}}-\mathds{1}_{\tilde{\Omega}_{\kappa, n+1}})\|\xi_u^n\|_{L^2}^2\right]\notag\\
\ge&\,\frac12\mathbb{E}\left[\mathds{1}_{\tilde{\Omega}_{\kappa, n+1}}\|\xi_u^{n+1}\|_{L^2}^2\right] - \frac12\mathbb{E}\left[\mathds{1}_{\tilde{\Omega}_{\kappa, n}}\|\xi_u^n\|_{L^2}^2\right] + \frac12\mathbb{E}\left[\mathds{1}_{\tilde{\Omega}_{\kappa, n+1}}\|\xi_u^{n+1}-\xi_u^n\|_{L^2}^2\right]\notag.
\end{align}
By Lemma \ref{lem_20201226_2}, the right-hand side of \eqref{eq20210605_1} is bounded as
\begin{align}
&\mathbb{E}\left[\mathds{1}_{\tilde{\Omega}_{\kappa, n+1}}\int_{t_n}^{t_{n+1}} (v(s) -
  v_h^{n+1}, \xi_u^{n+1}) ds\right]\label{eq20210605_4}\\
=\,&\mathbb{E}\left[\mathds{1}_{\tilde{\Omega}_{\kappa, n+1}}\int_{t_n}^{t_{n+1}} (v(s) -
  v(t^{n+1}), \xi_u^{n+1}) ds\right]\notag\\
  &\quad+\mathbb{E}\left[\mathds{1}_{\tilde{\Omega}_{\kappa, n+1}}\int_{t_n}^{t_{n+1}} (\eta_v^{n+1} + \xi_v^{n+1}), \xi_u^{n+1}) ds\right]\notag\\
 \le\, &\tau\mathbb{E}\left[\mathds{1}_{\tilde{\Omega}_{\kappa, n+1}} \|\xi_u^{n+1}\|_{L^2}^2 \right]+\tau\mathbb{E}\left[\mathds{1}_{\tilde{\Omega}_{\kappa, n+1}} \|\xi_v^{n+1}\|_{L^2}^2 \right] + C\tau^2\notag.
\end{align}

Following the derivation of the inequality \eqref{eq20210605_3}, the first term on the left-hand side of \eqref{eq20210605_2} can be bounded by
\begin{align} \label{eq20210605_5}
& \mathbb{E}\left[\mathds{1}_{\tilde{\Omega}_{\kappa, n+1}}(\xi_v^{n+1} - \xi_v^n, \xi_v^{n+1})\right] \\
\ge&\,\frac12\mathbb{E}\left[\mathds{1}_{\tilde{\Omega}_{\kappa, n+1}}\|\xi_v^{n+1}\|_{L^2}^2\right] - \frac12\mathbb{E}\left[\mathds{1}_{\tilde{\Omega}_{\kappa, n}}\|\xi_v^n\|_{L^2}^2\right] + \frac12\mathbb{E}\left[\mathds{1}_{\tilde{\Omega}_{\kappa, n+1}}\|\xi_v^{n+1}-\xi_v^n\|_{L^2}^2\right]\notag.
\end{align}
The second term on the left-hand side of \eqref{eq20210605_2} is
\begin{align} \label{eq20210605_7}
&\mathbb{E}\left[\mathds{1}_{\tilde{\Omega}_{\kappa, n+1}}\int_{t_n}^{t_{n+1}}(\nabla u(s)
  - \nabla u_h^{n+1}, \nabla \xi_v^{n+1}) ds\right]\\
=&\,\mathbb{E}\left[\mathds{1}_{\tilde{\Omega}_{\kappa, n+1}}\int_{t_n}^{t_{n+1}}(\nabla u(s)
  - \nabla u(t_{n+1}), \nabla \xi_v^{n+1}) ds\right] \notag\\
 &+ \mathbb{E}\left[\mathds{1}_{\tilde{\Omega}_{\kappa, n+1}}\int_{t_n}^{t_{n+1}}(\nabla e_u^{n+1}, \nabla \xi_v^{n+1}) ds\right].\notag
\end{align}
Notice that
\begin{align} \label{eq20210605_6}
\nabla \xi_v^{n+1} &=  \nabla P_hv(t_{n+1}) - \nabla v_h^{n+1}\\
&=(\nabla(P_hu_t(t_{n+1})) -\nabla(d_tu(t_{n+1})))+( \nabla(d_tu(t_{n+1}))- \nabla(d_tu_h^{n+1}))\notag.
\end{align}
For the first term on the right-hand side of \eqref{eq20210605_7}, we can move it to the right-hand side of \eqref{eq20210605_2} by adding a negative sign, and then bound it by 
\begin{align} \label{eq20210605_8}
&-\mathbb{E}\left[\mathds{1}_{\tilde{\Omega}_{\kappa, n+1}}\int_{t_n}^{t_{n+1}}(\nabla u(s)
  - \nabla u(t_{n+1}), \nabla \xi_v^{n+1}) ds\right]\\
\le&\,C\mathbb{E}\left[\mathds{1}_{\tilde{\Omega}_{\kappa, n+1}}\|\nabla u(s) - \nabla u(t_{n+1})\|_{L^2}^2\right] \notag\\
&+ \tau^2\mathbb{E}\left[\mathds{1}_{\tilde{\Omega}_{\kappa, n+1}}\|\nabla(P_hu_t(t_{n+1})) - \nabla(d_tu(t_{n+1}))\|_{L^2}^2\right]\notag\\
&+\frac14\tau^2\mathbb{E}\left[\mathds{1}_{\tilde{\Omega}_{\kappa, n+1}}\| \nabla(d_tu(t_{n+1}))- \nabla(d_tu_h^{n+1})\|_{L^2}^2\right]\notag\\
\le& C\tau^2 + \frac14 \mathbb{E}\left[\mathds{1}_{\tilde{\Omega}_{\kappa, n+1}}\|\nabla e_u^{n+1}-\nabla e_u^n\|_{L^2}^2\right]\notag,
\end{align}
where the triangle inequality, the $H^1$ stability of the $L^2$-projection, Lemma \ref{lem_20200630_1}, and the proof of Lemma \ref{lem_20200630_1} are used in the derivation of the last inequality.
For the second term on the right-hand side of \eqref{eq20210605_7}, we again move the first term in \eqref{eq20210605_6} to the right-hand side of \eqref{eq20210605_2} by adding a negative sign, and obtain
\begin{align} \label{eq20210606_1}
&-\mathbb{E}\left[\mathds{1}_{\tilde{\Omega}_{\kappa, n+1}}\int_{t_n}^{t_{n+1}}(\nabla e_u^{n+1}, \nabla(P_hu_t(t_{n+1})) - \nabla(d_tu(t_{n+1})) ds\right]\\
\le&\, C\tau\mathbb{E}\left[\mathds{1}_{\tilde{\Omega}_{\kappa, n+1}}\|\nabla \xi_u^{n+1}\|_{L^2}^2\right] + C\tau h^2 \mathbb{E}\left[\mathds{1}_{\tilde{\Omega}_{\kappa, n+1}}\|u(t_{n+1})\|_{H^2}^2\right] \notag\\
&+ \tau\mathbb{E}\left[\mathds{1}_{\tilde{\Omega}_{\kappa, n+1}}\|\nabla(P_hu_t(t_{n+1})) - \nabla(P_hd_tu(t_{n+1}))\|_{L^2}^2\right]\notag\\
&+\tau\mathbb{E}\left[\mathds{1}_{\tilde{\Omega}_{\kappa, n+1}}\|\nabla(P_hd_tu(t_{n+1})) -\nabla(d_tu(t_{n+1})\|_{L^2}^2\right]\notag\\
\le&\, C\tau\mathbb{E}\left[\mathds{1}_{\tilde{\Omega}_{\kappa, n+1}}\|\nabla \xi_u^{n+1}\|_{L^2}^2\right] + C\tau h^2 \mathbb{E}\left[\mathds{1}_{\tilde{\Omega}_{\kappa, n+1}}\|u(t_{n+1})\|_{H^2}^2\right]\notag\\
&+C\tau^2 + C\tau h^2 \mathbb{E}\left[\mathds{1}_{\tilde{\Omega}_{\kappa, n+1}}\|d_tu(t_{n+1})\|_{H^2}^2\right]\notag\\
\le&\, C\tau\mathbb{E}\left[\mathds{1}_{\tilde{\Omega}_{\kappa, n+1}}\|\nabla \xi_u^{n+1}\|_{L^2}^2\right] + C\tau h^2 \sup\limits_{s\le T}\mathbb{E}\left[\mathds{1}_{\tilde{\Omega}_{\kappa, n+1}}\|u(s)\|_{H^2}^2\right]\notag\\
&\,+C\tau^2 + C\tau h^2,\notag
\end{align}
where the mean value theorem, the $H^1$ stability of the $L^2$ project, and Lemma \ref{lem_20201226_4} are used in the second last inequality, and Lemmas \ref{lem_20201226_1}, \ref{lem_20200630_1}, \ref{lem_20200717_1} are used in the last inequality. 
The second term in \eqref{eq20210605_6} is bounded by
\begin{align} \label{eq20210606_2}
&\mathbb{E}\left[\mathds{1}_{\tilde{\Omega}_{\kappa, n+1}}\int_{t_n}^{t_{n+1}}(\nabla e_u^{n+1}, \nabla d_t e_u^{n+1}) ds\right]\\
=& \, \frac12\mathbb{E}\left[\mathds{1}_{\tilde{\Omega}_{\kappa, n+1}}\|\nabla e_u^{n+1}\|_{L^2}^2 ds\right]-\frac12\mathbb{E}\left[\mathds{1}_{\tilde{\Omega}_{\kappa, n+1}}\|\nabla e_u^{n}\|_{L^2}^2 ds\right] \notag\\
&+   \frac12\mathbb{E}\left[\mathds{1}_{\tilde{\Omega}_{\kappa, n+1}}\|\nabla e_u^{n+1}-\nabla e_u^n\|_{L^2}^2 ds\right] \notag\\
\ge&\, \frac12\mathbb{E}\left[\mathds{1}_{\tilde{\Omega}_{\kappa, n+1}}\|\nabla e_u^{n+1}\|_{L^2}^2 ds\right]-\frac12\mathbb{E}\left[\mathds{1}_{\tilde{\Omega}_{\kappa, n}}\|\nabla e_u^{n}\|_{L^2}^2 ds\right] \notag\\
&+   \frac12\mathbb{E}\left[\mathds{1}_{\tilde{\Omega}_{\kappa, n+1}}\|\nabla e_u^{n+1}-\nabla e_u^n\|_{L^2}^2 ds\right] \notag.
\end{align}

The first term on the right-hand side of \eqref{eq20210605_2} is
\begin{align}\label{eq20210606_5}
&\mathbb{E}\left[\mathds{1}_{\tilde{\Omega}_{\kappa, n+1}}\int_{t_n}^{t_{n+1}}  ( f(u(s)) - f_h^{n+1}, \xi_v^{n+1}) ds\right]\\
= &\, \mathbb{E}\left[\mathds{1}_{\tilde{\Omega}_{\kappa, n+1}}\int_{t_n}^{t_{n+1}}  ( f(u(s)) - f(u(t_{n+1})), \xi_v^{n+1}) ds\right]\notag\\
& + \mathbb{E}\left[\mathds{1}_{\tilde{\Omega}_{\kappa, n+1}}\int_{t_n}^{t_{n+1}}  ( f(u(t_{n+1}) - f_h^{n+1}, \xi_v^{n+1}) ds\right]\notag.
\end{align}
By Lemma \ref{lem_20201226_5}, the first term on the right-hand side of \eqref{eq20210606_5} is bounded as
\begin{align}\label{eq20210606_6}
&\mathbb{E}\left[\mathds{1}_{\tilde{\Omega}_{\kappa, n+1}}\int_{t_n}^{t_{n+1}}  ( f(u(s)) - f(u(t_{n+1})), \xi_v^{n+1}) ds\right]\\
\le& \,C\tau\mathbb{E}\left[\mathds{1}_{\tilde{\Omega}_{\kappa, n+1}}\|\xi_v^{n+1}\|_{L^2}^2\right] + C\kappa^{q-1}\tau^3.\notag
\end{align}
By Lemma \ref{lem20190928_1}, the second term on the right-hand side of \eqref{eq20210606_5} is bounded as
\begin{align}\label{eq20210606_7}
&\mathbb{E}\left[\mathds{1}_{\tilde{\Omega}_{\kappa, n+1}}\int_{t_n}^{t_{n+1}}  ( f(u(t_{n+1}) - f_h^{n+1}, \xi_v^{n+1}) ds\right]\\
\le&\,C\tau\mathbb{E}\Bigl[\mathds{1}_{\tilde{\Omega}_{\kappa, n+1}}\sum\limits_{j=1}^q \left(\|u(t_{n+1})\|_{H^1}^{2(j-1)}+\|u_h^{n+1}\|_{H^1}^{2(j-1)}\right) \notag\\
&\times\|u(t_{n+1})-u_h^{n+1}\|_{H^1}^2\Bigr]+ \tau\E \left[\mathds{1}_{\tilde{\Omega}_{\kappa, n+1}}\|\xi_v^{n+1}\|_{L^2}^2 \right] \notag\\
\le &\,C\kappa^{q-1}\tau \mathbb{E}\left[\mathds{1}_{\tilde{\Omega}_{\kappa, n+1}}\|\xi_u^{n+1}\|_{L^2}^2 \right] + C\kappa^{q-1}\tau h^2 + \tau\E \left[\mathds{1}_{\tilde{\Omega}_{\kappa, n+1}}\|\xi_v^{n+1}\|_{L^2}^2 \right] \notag.
\end{align}

By It\^{o} isometry, the second term on the right-hand side of \eqref{eq20210605_2} is bounded as
\begin{align}\label{eq20210606_8}
&\mathbb{E}\left[\mathds{1}_{\tilde{\Omega}_{\kappa, n+1}}\int_{t_n}^{t_{n+1}} (g(u(s)) - g(u_h^n), \xi_v^{n+1}) dW(s)\right]\\
=& \mathbb{E}\left[\mathds{1}_{\tilde{\Omega}_{\kappa, n+1}}\int_{t_n}^{t_{n+1}} (g(u(s)) - g(u(t_n)), \xi_v^{n+1}-\xi_v^n) dW(s)\right]\notag\\
&+\mathbb{E}\left[\mathds{1}_{\tilde{\Omega}_{\kappa, n+1}}\int_{t_n}^{t_{n+1}} (g(u(t_n))-g(u_h^n), \xi_v^{n+1}-\xi_v^n) dW(s)\right]\notag\\
\le& \frac14\mathbb{E}\left[\mathds{1}_{\tilde{\Omega}_{\kappa, n+1}} \|\xi_v^{n+1}-\xi_v^n\|_{L^2}^2\right] + C\tau^3 + C\tau h^4\sup\limits_{s\le T}\mathbb{E}\left[\mathds{1}_{\tilde{\Omega}_{\kappa, n+1}}\|u(s)\|_{H^2}^2\right]\notag\\
& + C\tau\mathbb{E}\left[\mathds{1}_{\tilde{\Omega}_{\kappa, n+1}} \|\xi_u^{n}\|_{L^2}^2 \right] \notag.
\end{align}

Combining \eqref{eq20210605_1}-\eqref{eq20210606_8}, we obtain the estimate
\begin{align*}
& \frac12\mathbb{E}\left[\mathds{1}_{\tilde{\Omega}_{\kappa, n+1}}\|\xi_u^{n+1}\|_{L^2}^2\right] - \frac12\mathbb{E}\left[\mathds{1}_{\tilde{\Omega}_{\kappa, n}}\|\xi_u^n\|_{L^2}^2\right] + \frac12\mathbb{E}\left[\mathds{1}_{\tilde{\Omega}_{\kappa, n+1}}\|\xi_u^{n+1}-\xi_u^n\|_{L^2}^2\right] \\
&+\frac12\mathbb{E}\left[\mathds{1}_{\tilde{\Omega}_{\kappa, n+1}}\|\xi_v^{n+1}\|_{L^2}^2\right] - \frac12\mathbb{E}\left[\mathds{1}_{\tilde{\Omega}_{\kappa, n}}\|\xi_v^n\|_{L^2}^2\right] + \frac12\mathbb{E}\left[\mathds{1}_{\tilde{\Omega}_{\kappa, n+1}}\|\xi_v^{n+1}-\xi_v^n\|_{L^2}^2\right]\notag\\
&+\frac12\mathbb{E}\left[\mathds{1}_{\tilde{\Omega}_{\kappa, n+1}}\|\nabla e_u^{n+1}\|_{L^2}^2 ds\right]-\frac12\mathbb{E}\left[\mathds{1}_{\tilde{\Omega}_{\kappa, n}}\|\nabla e_u^{n}\|_{L^2}^2 ds\right] \notag\\
&+ \frac12\mathbb{E}\left[\mathds{1}_{\tilde{\Omega}_{\kappa, n+1}}\|\nabla e_u^{n+1}-\nabla e_u^n\|_{L^2}^2 ds\right]\notag\\
 \le &\,C\kappa^{q-1}\tau\mathbb{E}\left[\mathds{1}_{\tilde{\Omega}_{\kappa, n+1}} \|\xi_u^{n+1}\|_{L^2}^2 \right]+C\tau\mathbb{E}\left[\mathds{1}_{\tilde{\Omega}_{\kappa, n+1}} \|\xi_v^{n+1}\|_{L^2}^2 \right] + C\tau^2 + C\kappa^{q-1}\tau h^2\notag\\
 &+ C\tau h^2 +C\tau\mathbb{E}\left[\mathds{1}_{\tilde{\Omega}_{\kappa, n+1}}\|\nabla \xi_h^{n+1}\|_{L^2}^2\right]+ \frac14 \mathbb{E}\left[\mathds{1}_{\tilde{\Omega}_{\kappa, n+1}}\|\nabla e_u^{n+1}-\nabla e_u^n\|_{L^2}^2\right]\notag\\
 &+\frac14\mathbb{E}\left[\mathds{1}_{\tilde{\Omega}_{\kappa, n+1}} \|\xi_v^{n+1}-\xi_v^n\|_{L^2}^2\right]+ C\tau\mathbb{E}\left[\mathds{1}_{\tilde{\Omega}_{\kappa, n+1}} \|\xi_u^{n}\|_{L^2}^2 \right]+C\kappa^{q-1}\tau^3.\notag
\end{align*}
By choosing $\kappa^{q-1}=C\ln(h^{-\beta})$, where $\beta>0$ is small enough, taking the summation over $n$ from $0$ to $\ell-1$, and applying Gronwall's inequality, we obtain
\begin{align}\label{eq20210606_4}
& \mathbb{E}\left[\mathds{1}_{\tilde{\Omega}_{\kappa, \ell}}\|\xi_u^{\ell}\|_{L^2}^2\right]+\mathbb{E}\left[\mathds{1}_{\tilde{\Omega}_{\kappa, \ell}}\|\nabla e_u^{\ell}\|_{L^2}^2 ds\right]+\mathbb{E}\left[\mathds{1}_{\tilde{\Omega}_{\kappa, \ell}}\|\xi_v^{\ell}\|_{L^2}^2\right] \\
&+ \sum\limits_{n=0}^{\ell-1}\mathbb{E}\left[\mathds{1}_{\tilde{\Omega}_{\kappa, n+1}}\|\xi_v^{n+1}-\xi_v^n\|_{L^2}^2\right]+ \sum\limits_{n=0}^{\ell-1}\mathbb{E}\left[\mathds{1}_{\tilde{\Omega}_{\kappa, n+1}}\|\xi_u^{n+1}-\xi_u^n\|_{L^2}^2\right]\notag\\
&+ \sum\limits_{n=0}^{\ell-1}\mathbb{E}\left[\mathds{1}_{\tilde{\Omega}_{\kappa, n+1}}\|\nabla e_u^{n+1}-\nabla e_u^n\|_{L^2}^2 ds\right]\notag\\
\le &\,(C\kappa^{q-1}h^2 + C\tau+C\kappa^{q-1}\tau^2)h^{-\beta}\notag\\
\le &\,(Ch^2|\ln h| + C\tau+C\tau^2|\ln h|)h^{-\beta}\notag. 
\end{align}
Using the Markov's inequality, discrete Burkholder–Davis–Gundy inequalities \cite{beiglbock2015pathwise,burkholder1966martingale,burkholder1970extrapolation,davis1970intergrability}, equation \eqref{eq:stability-continuous}, and Theorem \ref{thm20180815_1}, we have the following property
\begin{align}\label{eq20160601_5}
\P\left[\tilde{\Omega}_{\kappa,\ell}\right]&\ge 1-\frac{\E\left[\max\limits_{1\le n\le\ell}\|u_h^n\|_{H^1}^2+\max\limits_{s\le t_\ell}\|u(s)\|_{H^1}^2\right]}{(C\ln(h^{-\beta}))^{\frac{1}{q-1}}}\\
&\ge 1- \frac{1}{\beta^{\frac{1}{q-1}}(-\ln h)^{\frac{1}{q-1}}}\rightarrow1\ \text{as}\ h\rightarrow0\notag.
\end{align}
By combining \eqref{eq20210606_4} with the properties of the $L^2$-projection, we finish the proof of this theorem.
\end{proof}

\begin{remark}
\begin{enumerate}
\item We focus on the nonlinear case when $q>1$ since it is much easier to analyze the linear case when $q=1$;
\item The original form \eqref{dfem} and the mixed form \eqref{eq20210623_1}--\eqref{eq20210623_2} are mathematically equivalent, but there exists some difficulties in analyzing the noise term that might not be easily circumvented if the original form is used.
\end{enumerate}
\end{remark}

\section{Numerical Tests}\label{sec-5}
In this section, we provide various numerical tests to validate our
theoretical results. We consider the stability and error estimates of
our proposed numerical schemes based on different nonlinear drift terms
$f(u)$ and different diffusion terms $g(u)$ in both one-dimensional
and two-dimensional cases. 
The regular Monte-Carlo method is used to
compute the stochastic term, and $5000$ sample points are used for all
the tests below.
\smallskip
\paragraph{\bf Test 1.} Consider the one-dimensional stochastic wave
equations \eqref{sac_s}--\eqref{eq20191102_1} with the initial
conditions
\begin{equation*}
h_1(x)=\cos(\pi x),\qquad h_2(x)=0,
\end{equation*}
and different nonlinear drift and diffusion terms outlined below.

First, we consider the nonlinear drift and diffusion terms chosen to
be $f(u)=-u-u^3$ and $g(u)=u$. We evaluate the following
errors, $\left\{\sup\limits_{0 \leq n \leq N} \E \bigl[ \| e^n
\|^2_{L^2} \bigr]\right\}^{\frac12}$ ($L^2$ error),
$\left\{\sup\limits_{0 \leq n \leq N} \E \bigl[ \|\nabla e^n
\|^2_{L^2} \bigr]\right\}^{\frac12}$ ($H^1$ error), and
$\left\{\sup\limits_{0 \leq n \leq N} \E \bigl[ \|d_te^n \|^2_{L^2}
\bigr]\right\}^{\frac12}$ ($d_tL^2$ error). Table \ref{tab_error1}
show these errors and their convergence rates with respect to space,
when the time step is fixed to be $\tau = 1\times 10^{-3}$, from which
the spatial order of 2 ($L^2$ error), 1 ($H^1$ error), 2
($d_tL^2$ error) can be observed. Similarly, we fix the spatial step
size and report the errors and their convergence rates with respect to
time in Table \ref{tab_error2}, where the temporal order of 1 are
observed for all three errors. 

\begin{table}[!htbp]
\centering 
\footnotesize
\begin{tabular}{|l||c|c||c|c||c|c|}
\hline  
&  $L^2$ error \quad & order &   
$H^1$ error \quad & order &   
$d_tL^2$ error\quad & order \\ \hline    
$h=1/4$ & $7.902\times10^{-2}$ & --- &   
$8.798\times10^{-1}$ & --- &   
$1.488\times10^{-1}$ & --- \\ \hline    
$h=1/8$ & $1.602\times10^{-2}$ & 2.302 &   
$4.178\times10^{-1}$ & 1.074 &   
$3.369\times10^{-2}$ & 2.143\\ \hline    
$h=1/16$ & $3.792\times10^{-3}$ & 2.079 &   
$2.063\times10^{-1}$ & 1.018&   
$8.079\times10^{-3}$ & 2.060\\ \hline    
$h=1/32$ & $9.349\times10^{-4}$  & 2.020 &   
$1.028\times10^{-1}$ & 1.004&   
$1.999\times10^{-3}$ &  2.015\\ \hline    
$h=1/64$ & $2.329\times10^{-4}$ & 2.005 &   
$5.138\times10^{-2}$ & 1.001 &   
$4.984\times10^{-4}$  & 2.004\\ \hline    
\end{tabular}
\caption{Test 1 (a): Spatial errors and convergence rates when $\tau =
1\times 10^{-3}$, $T = 0.01$.}
\label{tab_error1}
\end{table}

\begin{table}[!htbp]
\vspace*{-5mm}
\centering 
\footnotesize
\begin{tabular}{|l||c|c||c|c||c|c|}
\hline  
&  $L^2$ error \quad & order &   
$H^1$ error \quad & order &   
$d_tL^2$ error\quad & order \\ \hline    
$\tau=0.1$ & $3.506\times10^{-2}$ & --- &   
$1.102\times10^{-1}$ & --- &   
$2.630\times10^{-1}$ & --- \\ \hline    
$\tau=0.1/2$ & $1.495\times10^{-2}$ & 1.230 &   
$4.708\times10^{-2}$ & 1.227 &   
$1.577\times10^{-1}$ & 0.738\\ \hline    
$\tau=0.1/4$ & $6.379\times10^{-3}$ & 1.229 &   
$2.017\times10^{-2}$ & 1.223&   
$8.577\times10^{-2}$ & 0.879\\ \hline    
$\tau=0.1/8$ & $2.860\times10^{-3}$  & 1.157 &   
$9.063\times10^{-3}$ & 1.154&   
$4.451\times10^{-2}$ &  0.946\\ \hline    
$\tau=0.1/16$ & $1.341\times10^{-3}$ & 1.093 &   
$4.253\times10^{-3}$ & 1.092 &   
$2.278\times10^{-2}$  & 0.966\\ \hline  
$\tau=0.1/32$ & $6.572\times10^{-4}$ & 1.029 &   
$2.084\times10^{-3}$ & 1.029 &   
$1.164\times10^{-2}$  & 0.969\\ \hline    
\end{tabular}
\caption{Test 1 (a): Temporal errors and convergence rates when $h=1/128$, $T = 0.4 $.}
\label{tab_error2}
\end{table}

Next, the nonlinear drift and diffusion terms are chosen to be $f(u)=-u-u^{11}$ and $g(u)=u$, and the corresponding results are demonstrated in Tables \ref{tab_error3} and \ref{tab_error4}. The convergence rates are consistent with those of the previous test where $f(u)=-u-u^3$.

\begin{table}[!htbp]
\centering 
\footnotesize
\begin{tabular}{|l||c|c||c|c||c|c|}
\hline  
&  $L^2$ error \quad & order &   
$H^1$ error \quad & order &   
$d_tL^2$ error\quad & order \\ \hline    
$h=1/4$ & $7.982\times10^{-2}$ & --- &   
$8.828\times10^{-1}$ & --- &   
$1.938\times10^{-1}$ & --- \\ \hline    
$h=1/8$ & $1.614\times10^{-2}$ & 2.306 &   
$4.207\times10^{-1}$ & 1.069 &   
$3.445\times10^{-2}$ & 2.492\\ \hline    
$h=1/16$ & $3.801\times10^{-3}$ & 2.086 &   
$2.068\times10^{-1}$ & 1.024&   
$8.168\times10^{-3}$ & 2.076\\ \hline    
$h=1/32$ & $9.369\times10^{-4}$  & 2.020 &   
$1.031\times10^{-1}$ & 1.004&   
$2.017\times10^{-3}$ &  2.018\\ \hline    
$h=1/64$ & $2.334\times10^{-4}$ & 2.005 &   
$5.149\times10^{-2}$ & 1.002 &   
$5.022\times10^{-4}$  & 2.006\\ \hline    
\end{tabular}
\caption{Test 1 (b): Spatial errors and convergence rates when $\tau = 1\times 10^{-3}$, $T = 0.01$.}
\label{tab_error3}
\end{table}

\begin{table}[!htbp]
\vspace*{-5mm}
\centering 
\footnotesize
\begin{tabular}{|l||c|c||c|c||c|c|}
\hline  
&  $L^2$ error \quad & order &   
$H^1$ error \quad & order &   
$d_tL^2$ error\quad & order \\ \hline    
$\tau=0.1$ & $3.206\times10^{-2}$ & --- &   
$1.008\times10^{-1}$ & --- &   
$2.526\times10^{-1}$ & --- \\ \hline    
$\tau=0.1/2$ & $1.342\times10^{-2}$ & 1.256 &   
$4.242\times10^{-2}$ & 1.249 &   
$1.504\times10^{-1}$ & 0.748\\ \hline    
$\tau=0.1/4$ & $5.692\times10^{-3}$ & 1.237 &   
$1.830\times10^{-2}$ & 1.213&   
$8.125\times10^{-2}$ & 0.888\\ \hline    
$\tau=0.1/8$ & $2.536\times10^{-3}$  & 1.166 &   
$8.338\times10^{-3}$ & 1.134&   
$4.236\times10^{-2}$ &  0.940\\ \hline    
$\tau=0.1/16$ & $1.196\times10^{-3}$ & 1.084 &   
$3.996\times10^{-3}$ & 1.061 &   
$2.164\times10^{-2}$  & 0.969\\ \hline  
$\tau=0.1/32$ & $5.938\times10^{-4}$ & 1.010 &   
$1.995\times10^{-3}$ & 1.002 &   
$1.106\times10^{-2}$  & 0.968\\ \hline    
\end{tabular}
\caption{Test 1 (b): Temporal errors and convergence rates when $h=1/128$, $T = 0.4 $.}
\label{tab_error4}
\end{table}

Lastly, the nonlinear drift and diffusion terms are chosen to be $f(u)=-u-u^{3}$ and $g(u)=\sqrt{u^2+0.01}$, and the corresponding results are included in Tables \ref{tab_error5} and  \ref{tab_error6}. We observe that the convergence rates of the $L^2$ and the $H^1$ errors are consistent with the above cases, but the temporal order of the $d_tL^2$ error loses a half.

\begin{table}[!htbp]
\centering 
\footnotesize
\begin{tabular}{|l||c|c||c|c||c|c|}
\hline  
&  $L^2$ error \quad & order &   
$H^1$ error \quad & order &   
$d_tL^2$ error\quad & order \\ \hline    
$h=1/4$ & $7.901\times10^{-2}$ & --- &   
$8.798\times10^{-1}$ & --- &   
$1.469\times10^{-1}$ & --- \\ \hline    
$h=1/8$ & $1.602\times10^{-2}$ & 2.302 &   
$4.178\times10^{-1}$ & 1.074 &   
$3.364\times10^{-2}$ & 2.127\\ \hline    
$h=1/16$ & $3.792\times10^{-3}$ & 2.079 &   
$2.063\times10^{-1}$ & 1.018&   
$8.053\times10^{-3}$ & 2.063\\ \hline    
$h=1/32$ & $9.349\times10^{-4}$  & 2.020 &   
$1.028\times10^{-1}$ & 1.005&   
$1.991\times10^{-3}$ &  2.016\\ \hline    
$h=1/64$ & $2.329\times10^{-4}$ & 2.005 &   
$5.138\times10^{-2}$ & 1.001 &   
$4.964\times10^{-4}$  & 2.004\\ \hline    
\end{tabular}
\caption{Test 1 (c): Spatial errors and convergence rates when $\tau = 1\times 10^{-3}$, $T = 0.01$.}
\label{tab_error5}
\end{table}

\begin{table}[!htbp]
\vspace*{-5mm}
\centering 
\footnotesize
\begin{tabular}{|l||c|c||c|c||c|c|}
\hline  
&  $L^2$ error \quad & order &   
$H^1$ error \quad & order &   
$d_tL^2$ error\quad & order \\ \hline    
$\tau=0.1$ & $1.132\times10^{-1}$ & --- &   
$3.502\times10^{-1}$ & --- &   
$1.052\times10^{-1}$ & --- \\ \hline    
$\tau=0.1/2$ & $7.337\times10^{-2}$ & 0.626 &   
$2.292\times10^{-1}$ & 0.612 &   
$7.203\times10^{-2}$ & 0.546\\ \hline    
$\tau=0.1/4$ & $4.198\times10^{-2}$ & 0.805 &   
$1.318\times10^{-1}$ & 0.798&   
$4.773\times10^{-2}$ & 0.594\\ \hline    
$\tau=0.1/8$ & $2.242\times10^{-2}$  & 0.905 &   
$7.068\times10^{-2}$ & 0.899&   
$3.184\times10^{-2}$ &  0.584\\ \hline    
$\tau=0.1/16$ & $1.158\times10^{-2}$ & 0.953 &   
$3.666\times10^{-2}$ & 0.947 &   
$2.169\times10^{-2}$  & 0.554\\ \hline  
$\tau=0.1/32$ & $5.890\times10^{-3}$ & 0.975 &   
$1.871\times10^{-2}$ & 0.970 &   
$1.495\times10^{-2}$  & 0.537\\ \hline    
\end{tabular}
\caption{Test 1 (c): Temporal errors and convergence rates when $h=1/128$, $T = 0.4 $.}
\label{tab_error6}
\end{table}

\smallskip
\noindent {\bf Test 2.} Consider the one-dimensional stochastic wave equations \eqref{sac_s}--\eqref{eq20191102_1} with the following initial conditions (with less regularity)
\begin{equation*}
h_1(x)=0,\qquad h_2(x)=\max\{0,1-4|x-0.5|\}.
\end{equation*}
The nonlinear drift and diffusion terms are chosen to be $f(u)=-u-u^3$ and $g(u)=u$. 
Table \ref{tab_error7} and Table \ref{tab_error8} show the $L^2$ error, the $H^1$ error, the $d_tL^2$ error, and their convergence rates in both space and time. The spatial convergence rates are the same as in the previous cases, and the temporal convergence rates of the $L^2$, $H^1$, and $d_tL^2$ errors are approximately $1.0$, $1.0$, and $0.75$.

\begin{table}[!htbp]
\centering 
\footnotesize
\begin{tabular}{|l||c|c||c|c||c|c|}
\hline  
&  $L^2$ error \quad & order &   
$H^1$ error \quad & order &   
$d_tL^2$ error\quad & order \\ \hline    
$h=1/16$ & $2.565\times10^{-4}$ & --- &   
$2.335\times10^{-2}$ & --- &   
$4.586\times10^{-3}$ & ---\\ \hline    
$h=1/32$ & $6.355\times10^{-5}$ & 2.013 &   
$1.097\times10^{-2}$ & 1.089&   
$1.896\times10^{-3}$ & 1.275\\ \hline    
$h=1/64$ & $1.642\times10^{-5}$  & 1.952 &   
$5.500\times10^{-3}$ & 0.996&   
$5.391\times10^{-4}$ &  1.814\\ \hline    
$h=1/128$ & $4.142\times10^{-6}$ & 1.987 &   
$2.746\times10^{-3}$ & 1.002 &   
$1.410\times10^{-4}$  & 1.935\\ \hline    
$h=1/256$ & $1.038\times10^{-6}$ & 1.997 &   
$1.373\times10^{-3}$ & 1.001 &   
$3.565\times10^{-5}$  & 1.984\\ \hline    
\end{tabular}
\caption{Test 2: Spatial errors and convergence rates when $\tau = 5\times 10^{-3}$, $T = 5\times 10^{-2}$.}
\label{tab_error7}
\end{table}

\begin{table}[!htbp]
\vspace*{-5mm}
\centering 
\footnotesize
\begin{tabular}{|l||c|c||c|c||c|c|}
\hline  
&  $L^2$ error \quad & order &   
$H^1$ error \quad & order &   
$d_tL^2$ error\quad & order \\ \hline    
$\tau=0.1/8$ & $1.324\times10^{-3}$ & --- &   
$7.489\times10^{-3}$ & --- &   
$4.653\times10^{-2}$ & ---\\ \hline    
$\tau=0.1/16$ & $6.437\times10^{-4}$ & 1.040 &   
$3.963\times10^{-3}$ & 0.918&   
$3.130\times10^{-2}$ & 0.572\\ \hline    
$\tau=0.1/32$ & $3.163\times10^{-4}$  & 1.025 &   
$1.935\times10^{-3}$ & 1.035&   
$1.920\times10^{-2}$ &  0.705\\ \hline    
$\tau=0.1/64$ & $1.549\times10^{-4}$ & 1.030 &   
$9.583\times10^{-4}$ & 1.014 &   
$1.152\times10^{-2}$  & 0.738\\ \hline  
$\tau=0.1/128$ & $7.659\times10^{-5}$ & 1.016 &   
$4.974\times10^{-4}$ & 0.946 &   
$6.908\times10^{-3}$  & 0.737\\ \hline    
\end{tabular}
\caption{Test 2: Temporal errors and convergence rates when $h=1/512$, $T = 1$.}
\label{tab_error8}
\end{table}

\smallskip
\noindent {\bf Test 3.} Consider the two-dimensional stochastic wave equations \eqref{sac_s}--\eqref{eq20191102_1} with the following initial conditions
\begin{equation*}
h_1(x,y)=\cos(\pi x)\cos(2\pi y),\qquad h_2(x,y)=0.
\end{equation*}
We investigate the stability (in different norms) of the proposed methods with various nonlinear drift and diffusion terms. For comparison, we also include the results of the deterministic equations.  

Figures \ref{fig2}, \ref{fig3}, and \ref{fig4} provide the time history of the stability in $L^2$, $H^1$, and $d_tL^2$ norms of the stochastic (left) and deterministic (right) solutions. The transparent shaded regions in the left figures are possible trajectories of all sample points, and the solid lines represent the average of all trajectories. In Figure \ref{fig2}, the nonlinear drift and diffusion terms are chosen to be $f(u)=-u-u^3$ and $g(u)=u$. In Figure \ref{fig3}, the nonlinear drift and diffusion terms are chosen to be $f(u)=-u-u^7$ and $g(u)=u$. In Figure \ref{fig4}, the nonlinear drift and diffusion terms are chosen to be $f(u)=-u-u^3$ and $g(u)=\sqrt{u^2+1}$. From these figures, we can observe a good match between the deterministic and stochastic solutions, and the stochastic numerical solutions do not blow up, which is consistent with the theoretical results provided in this paper. 

\begin{figure}[!htbp]
\centering 
\includegraphics[width=0.49\textwidth]{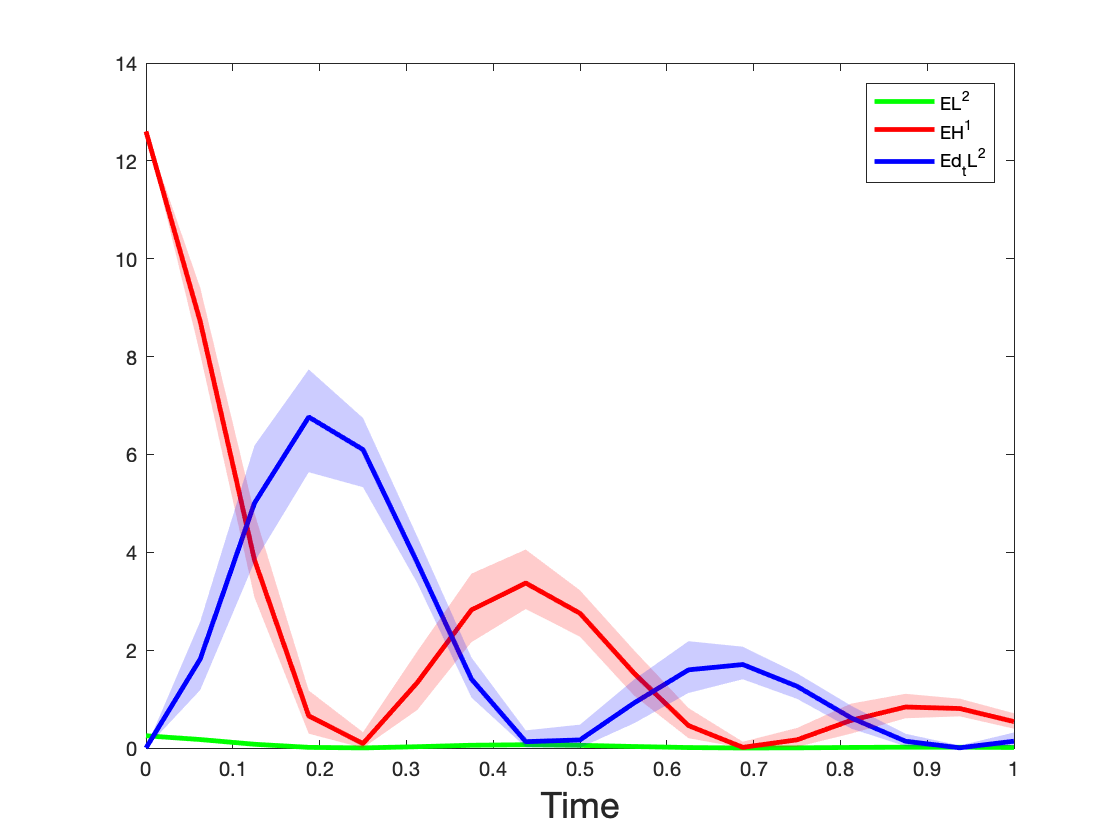} 
\includegraphics[width=0.49\textwidth]{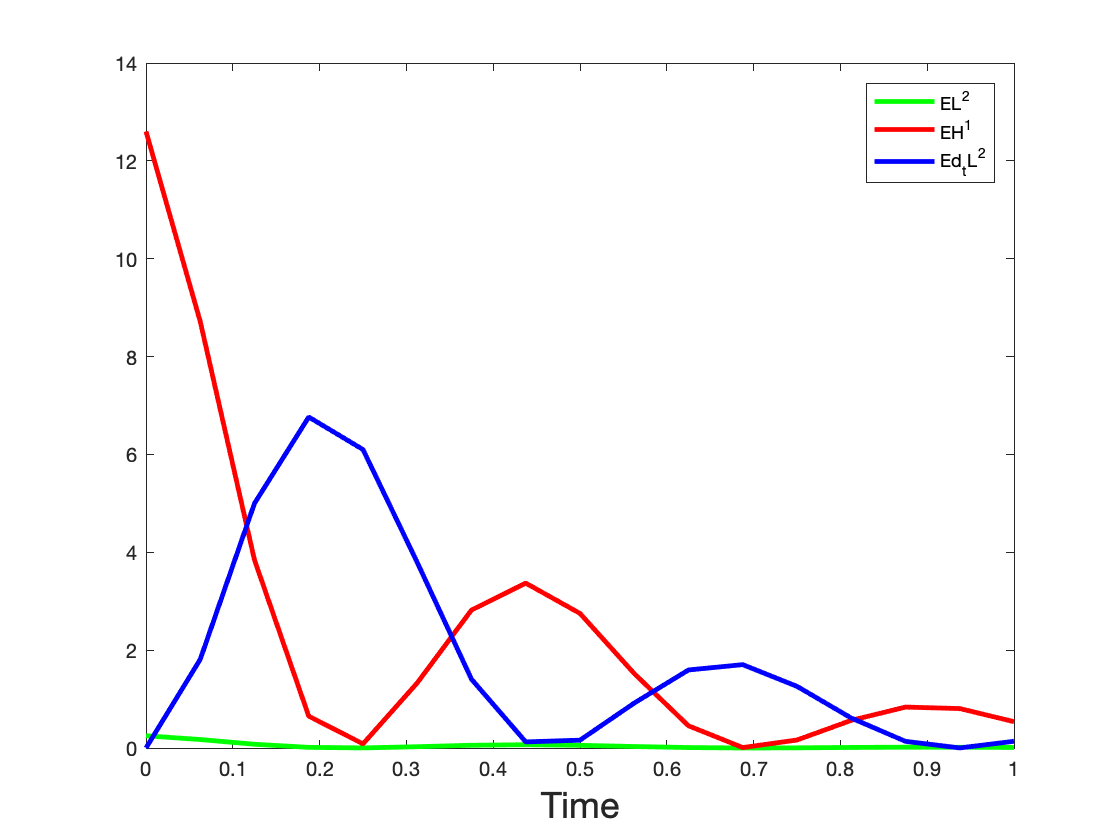} 
\caption{Test 3: The stability in the stochastic case (left), and the stability in the deterministic case (right). Here $f(u)=-u-u^3$ and $g(u)=u$.}
\label{fig2}
\end{figure}

\begin{figure}[!htbp]
\centering 
\includegraphics[width=0.49\textwidth]{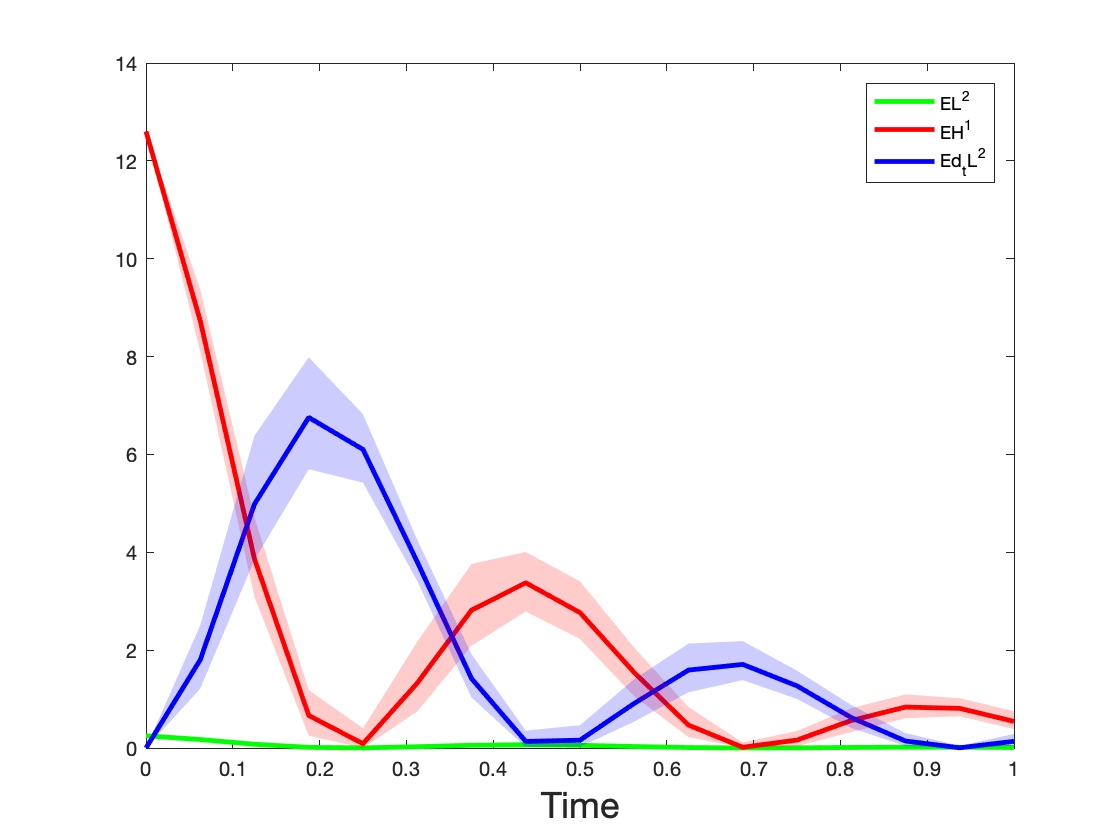} 
\includegraphics[width=0.49\textwidth]{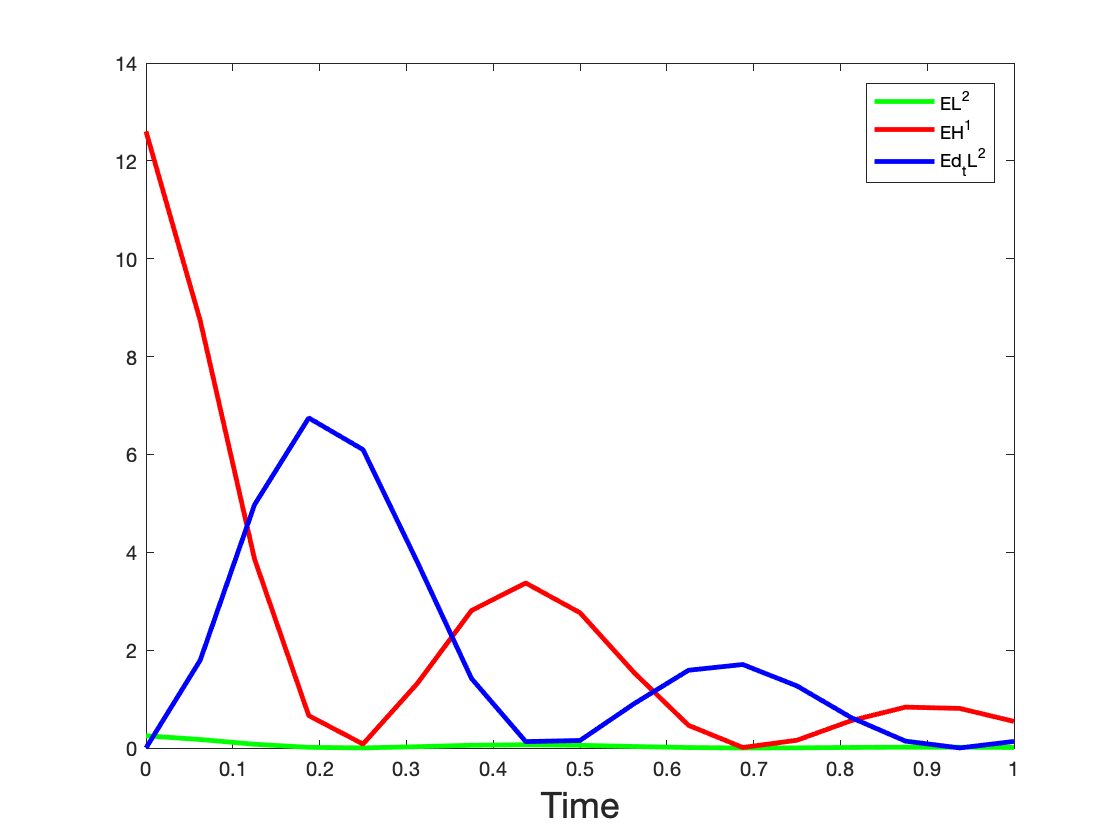} 
\caption{Test 3: The stability in the stochastic case (left), and the stability in the deterministic case (right). Here $f(u)=-u-u^7$ and $g(u)=u$.}
\label{fig3}
\end{figure}

\begin{figure}[!htbp]
\centering 
\includegraphics[width=0.49\textwidth]{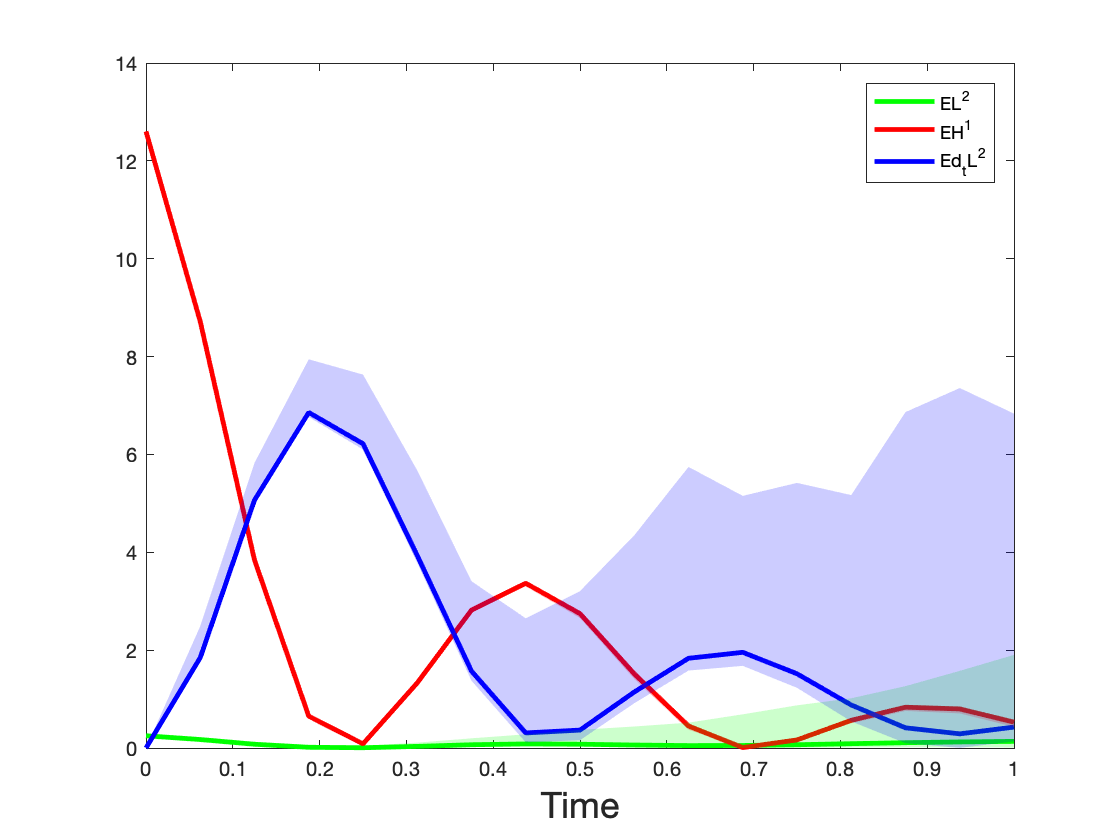} 
\includegraphics[width=0.49\textwidth]{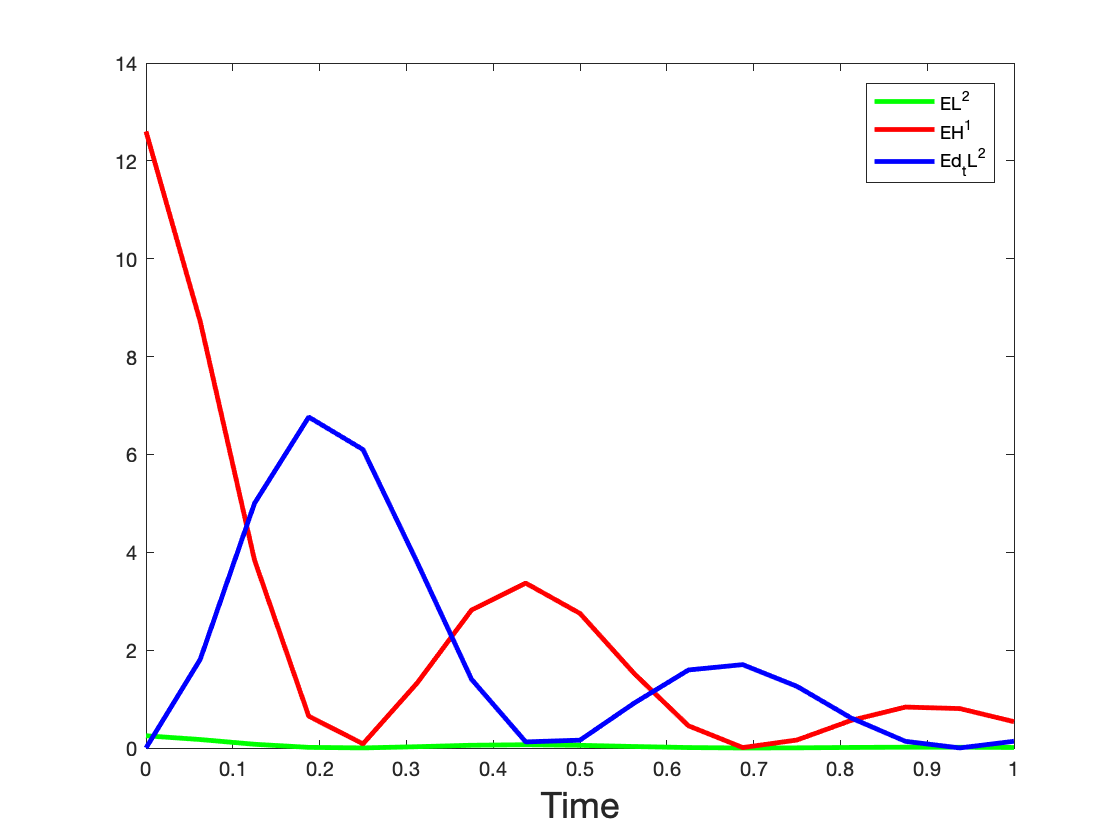} 
\caption{Test 3: The stability in the stochastic case (left), and the stability in the deterministic case (right). Here $f(u)=-u-u^3$ and $g(u)=\sqrt{u^2+1}$.}
\label{fig4}
\end{figure}


\end{document}